\newtheorem{theorem}{Theorem}[section]
\newtheorem*{theorem*}{Theorem}
\newtheorem{lemma}[theorem]{Lemma}
\newtheorem{corollary}[theorem]{Corollary}
\theoremstyle{definition}
\newtheorem{definition}[theorem]{Definition}
\newtheorem{example}[theorem]{Example}
\theoremstyle{plain}
\newcommand{\R}{\mathbb{R}}
\newcommand{\Z}{\mathbb{Z}}
\DeclareMathOperator{\SO}{SO}
\DeclareMathOperator{\Sp}{Sp}
\DeclareMathOperator{\GL}{GL}
\title{Inequalities characterizing distinguished unipotent orbits}
\author{Alexander Bertoloni Meli, Teruhisa Koshikawa, Jonathan Leake}
\begin{document}

\maketitle 

\begin{abstract}
In this paper we prove a new characterization of the distinguished unipotent orbits of a connected reductive group over an algebraically closed field of characteristic $0$. For classical groups we prove the characterization by a combinatorial computation, and for exceptional groups we check it with a computer. This characterization is needed in the theory of cuspidal sheaves on the stack of $L$-parameters in forthcoming work of the first two named authors. 
\end{abstract}

\tableofcontents

\section{Introduction}

Let $G$ be a connected reductive group over an algebraically closed field $k$ of characteristic $0$. Let $\Phi(G,T)$ denote the roots of $G$ relative to a fixed maximal torus $T$. We let $W$ denote the Weyl group of the corresponding root system. We fix a set of simple roots $\Delta \subset \Phi(G,T)$. This determines a set of positive roots $\Phi^+(G,T)$ and we define an absolute value function
\begin{equation*}
    | \cdot | : \Phi(G,T) \rightarrow \Phi^+(G,T),
\end{equation*}
such that $|r|$ is a positive root and equal to $r$ or $-r$.

Now assign a weight function
\begin{equation*}
    \rho: \Delta \rightarrow \{0,2\}.
\end{equation*}
Since each root in $\Phi(G,T)$ can be expressed uniquely as a linear combination of elements of $\Delta$, this determines a unique extension of $\rho$ to $\Phi(G,T)$. We let $V_i$ denote the set of roots of weight $i$.

\begin{definition}
For $w \in W$, we define:
\begin{equation*}
    \zeta(w) := \sum\limits_{v \in V_2} |w(v)| - \sum\limits_{v \in V_0} |w(v)|.
\end{equation*}

Further, we let $\zeta(w)_{\Delta}$ denote the coordinate vector of $\zeta(w)$ in the $\Delta$-basis and let $\zeta(w)_{\delta_i}$ denote the coordinate of $\zeta(w)$ with respect to  $\delta_i \in \Delta$.
\end{definition}

Recall that (\cite[Proposition 3.2]{BalaCarter}) we always have an inequality of cardinalities:
\begin{equation*}
    \# V_2 - \# V_0 \leq \# \Delta,
\end{equation*}
and that a $\rho$ is said to be \emph{distinguished} when $\# V_2 = \# V_0 + \# \Delta$. For a fixed $\Phi(G,T)$ and $\Delta$, the set of distinguished $\rho$ is in bijection with distinguished unipotent orbits of $G$.

We are now ready to state the main result of the paper.
\begin{theorem}{\label{thm: main}}
The weight function $\rho$ is distinguished if and only if for each $w \in W$, each coordinate in $\zeta(w)_{\Delta}$ is strictly positive.     
\end{theorem}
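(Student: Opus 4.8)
The plan is to trade the quantification over $W$ for a single piecewise-linear inequality on coweight space and then verify that inequality type by type.

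First I would reformulate the statement. Write $\Delta=\{\delta_1,\dots,\delta_n\}$, work in the $\R$-span of the coroots, and let $\varpi_1^\vee,\dots,\varpi_n^\vee$ be the basis of fundamental coweights dual to $\Delta$, so that $\zeta(w)_{\delta_i}=\langle\zeta(w),\varpi_i^\vee\rangle$. For any root $v$ and $w\in W$ the element $|w(v)|$ is a positive root, hence pairs non-negatively with the dominant coweight $\varpi_i^\vee$; since $w(v)=\pm|w(v)|$ this gives $\langle|w(v)|,\varpi_i^\vee\rangle=|\langle w(v),\varpi_i^\vee\rangle|=|\langle v,w^{-1}\varpi_i^\vee\rangle|$. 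Defining
\[
f_\rho(\mu)\;:=\;\sum_{v\in V_2}|\langle v,\mu\rangle|\;-\;\sum_{v\in V_0}|\langle v,\mu\rangle|,
\]
we therefore get $\zeta(w)_{\delta_i}=f_\rho(w^{-1}\varpi_i^\vee)$. The function $f_\rho$ is positively homogeneous of degree one, and on each closed Weyl chamber the sign of $\langle v,\cdot\rangle$ is constant for every root $v$, so $f_\rho$ is \emph{linear} there. A closed chamber is the nonnegative span of its extreme rays $\R_{\ge0}\,w\varpi_i^\vee$, and the chambers cover the whole space, so a function that is linear on each chamber and positive on all the rays $w\varpi_i^\vee$ is positive away from the origin. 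Hence the theorem is equivalent to: \emph{$\rho$ is distinguished if and only if $f_\rho(\mu)>0$ for every $\mu\neq0$.} I would also record two structural facts used throughout: $V_0$ is exactly the (symmetric) root subsystem $\Phi_I$ of the standard Levi attached to $I:=\{\delta\in\Delta:\rho(\delta)=0\}$, while $V_2\subseteq\Phi^+$ is the set of positive roots whose total coefficient on $\Delta\setminus I$ equals $1$.

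Next I would reduce to $G$ simple: for $G=G_1\times G_2$ the root system, $\Delta$, $W$, and both sums defining $f_\rho$ split, $f_\rho=f_{\rho_1}\oplus f_{\rho_2}$, and by the cardinality bound $\#V_2\le\#V_0+\#\Delta$ (\cite[Proposition~3.2]{BalaCarter}) applied to each factor, $\rho$ is distinguished iff each $\rho_j$ is; the central torus contributes nothing. So assume $\Phi$ irreducible. For the classical types $A_{n-1},B_n,C_n,D_n$ I would use the standard coordinate realization of the roots, under which a coweight is $\mu=(\mu_1,\dots,\mu_n)$ and the distinguished $\rho$ correspond to the classical list of distinguished partitions (principal in type $A$; distinct odd parts in type $B$; distinct even parts in type $C$; and the corresponding list in type $D$). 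For such a $\rho$ the sets $V_0=\Phi_I$ and $V_2$ become explicit families of vectors indexed by the blocks of the partition, and $f_\rho(\mu)>0$ becomes a concrete inequality in the $\mu_i$. Since $f_\rho$ is linear on chambers it suffices to test it on the $W$-orbits of the fundamental coweights — in classical coordinates the vectors having a fixed number of equal nonzero entries and the rest zero — which reduces the verification to a short per-block computation. For the converse (a non-distinguished $\rho$), I would produce an explicit nonzero $\mu$ with $f_\rho(\mu)\le0$, built from the fundamental coweight attached to the extra Levi factor exhibited by Bala--Carter induction (equivalently, a nonzero cocharacter centralizing a Dynkin $\mathfrak{sl}_2$-triple for $\rho$).

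For the exceptional types $G_2,F_4,E_6,E_7,E_8$ there are finitely many $\rho$ and finitely many Weyl chambers, so one verifies by computer that $f_\rho$ is positive on every ray $w\varpi_i^\vee$ for each distinguished $\rho$ (read off the standard tables of weighted Dynkin diagrams), and that each non-distinguished $\rho$ admits a ray on which $f_\rho\le0$. Assembling the simple cases via the reduction step then proves the theorem. I expect the main obstacle to be the classical computation: one must see that the \emph{equality} $\#V_2=\#V_0+\#\Delta$ — rather than the mere inequality $\#V_2\ge\#V_0$, which is all that averaging $f_\rho$ over a $W$-orbit detects — is precisely what forces strict positivity on every chamber. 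Concretely, the difficulty is to locate and bound the worst rays $w\varpi_i^\vee$, those maximizing $\sum_{v\in V_0}|\langle v,\mu\rangle|$ relative to $\sum_{v\in V_2}|\langle v,\mu\rangle|$; the Levi/partition structure of $V_0$ versus $V_2$ is what makes that tractable.
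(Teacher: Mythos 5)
Your reformulation is correct and is essentially the same move the paper makes, just phrased differently: writing $\zeta(w)_{\delta_i}=\langle\zeta(w),\varpi_i^\vee\rangle=f_\rho(w^{-1}\varpi_i^\vee)$ with $f_\rho(\mu)=\sum_{V_2}|\langle v,\mu\rangle|-\sum_{V_0}|\langle v,\mu\rangle|$, and then observing that piecewise-linearity on chambers lets you test only the extremal rays, is precisely what the paper does under the hood when it passes from $w\in W$ to ``splittings of $\{1,\ldots,n\}$'' (type $A$) and to the vectors $\bm{a}=(a_0,\ldots,a_{m^A})$ (types $B,C,D$): those objects are exactly coordinates for $w^{-1}\varpi_i^\vee$. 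The reduction to $G$ simple and the decision to verify the exceptional types by computer also match the paper. So the framing is sound; the difference is that you propose to work directly in classical coordinates rather than via the root-lattice maps $R\to S$ into $A$-type that the paper constructs, which is a legitimate and possibly cleaner alternative.

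The genuine gap is that everything after the reformulation is a plan, not a proof, and the one concrete suggestion you make for the converse fails in general. You say that for a non-distinguished $\rho$ you would take ``a nonzero cocharacter centralizing a Dynkin $\mathfrak{sl}_2$-triple for $\rho$.'' But in this theorem $\rho$ is an \emph{arbitrary} function $\Delta\to\{0,2\}$, not necessarily the weighted Dynkin diagram of any nilpotent orbit. For example, in $A_3$ take $\rho=(2,0,0)$: this is not distinguished ($\#V_2=3$, $\#V_0=6$, $\#\Delta=3$), and it is not the weighted Dynkin diagram of any $\mathfrak{sl}_4$ orbit (those are $(2,2,2)$, $(2,0,2)$, $(0,2,0)$, $(1,0,1)$, $(0,0,0)$), so there is no ``Dynkin $\mathfrak{sl}_2$-triple for $\rho$'' to centralize. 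One still has to find a violating ray (here $\mu=\varpi_2^\vee=(1,1,-1,-1)$ works, giving $f_\rho(\mu)=4-8<0$), but the recipe you gave does not produce it. The forward direction has a softer version of the same problem: the ``short per-block computation'' that you assert will show $f_\rho>0$ on all rays $w\varpi_i^\vee$ for distinguished $\rho$ is in fact the bulk of the paper, which requires a quadratic-form estimate (Lemma \ref{lem:matrix-ineq}), a characterization of which $\bm{a}$ arise from an actual $(w,\gamma)$ (Lemmas \ref{lem:B-wt-corresp}, \ref{lem:C-wt-corresp}, \ref{lem:D-wt-corresp}), and a separate construction of violating test vectors for each failure mode of the Bala--Carter inequalities on the block sizes $N_i$. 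None of this is routine, and none of it is supplied in your sketch.
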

This result therefore gives an equivalent definition for a distinguished parabolic subgroup of $G$ and also a new characterization of the set of distinguished unipotent orbits of $G$ inside the set of all even unipotent orbits.

We remark that the statement of Theorem \ref{thm: main} is insensitive to passing to derived subgroups and central extensions and behaves well with respect to products. Hence we are free to assume that $G$ is simple. We also record the following lemma dealing with the regular unipotent orbit.
\begin{lemma}{\label{lem: regular case}}
    Suppose that $\rho$ assigns weight $2$ to each simple root. Then $\zeta(w)_{\Delta}$ is strictly positive for each $w$.
\end{lemma}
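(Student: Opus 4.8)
The plan is to identify $V_2$ and $V_0$ explicitly and then reduce everything to a one-line linear algebra fact. Writing a root as $r = \sum_{\delta \in \Delta} c_\delta\,\delta$, its $\rho$-weight is $\sum_{\delta} c_\delta\,\rho(\delta)$, which under the hypothesis $\rho \equiv 2$ on $\Delta$ equals $2\sum_\delta c_\delta$, i.e.\ twice the height of $r$. Hence a root has weight $2$ exactly when it has height $1$, i.e.\ exactly when it is simple, and no root has weight $0$. Thus $V_2 = \Delta$, $V_0 = \emptyset$, and
\[
  \zeta(w) \;=\; \sum_{\alpha \in \Delta} |w(\alpha)| .
\]

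Next I would note that each term $|w(\alpha)|$ lies in $\Phi^+(G,T)$, hence is a nonnegative integer combination of the simple roots, so that for every index $i$ the coordinate $\zeta(w)_{\delta_i} = \sum_{\alpha \in \Delta} \bigl(|w(\alpha)|\bigr)_{\delta_i}$ is a sum of nonnegative integers and in particular is $\geq 0$. The real content of the lemma is the strict inequality, and for that it is enough to exhibit, for each fixed $i$, some $\alpha \in \Delta$ with $\delta_i \in \supp\bigl(|w(\alpha)|\bigr)$. Since negating a root does not change which simple roots occur in its support, this is the same as asking for $\alpha$ with $\delta_i \in \supp\bigl(w(\alpha)\bigr)$.

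Finally I would close with a dimension count: $\{w(\alpha) : \alpha \in \Delta\}$ is the image of the basis $\Delta$ of the root space under the invertible operator $w$, hence is again a basis of the root space; if $\delta_i$ occurred in no $\supp(w(\alpha))$, all of these vectors would lie in the span of $\Delta \setminus \{\delta_i\}$, a proper subspace, contradicting that they span. Hence some $\bigl(|w(\alpha)|\bigr)_{\delta_i}$ is strictly positive and $\zeta(w)_{\delta_i} > 0$ for all $i$. I do not anticipate a real obstacle here --- this is a warm-up case --- the only points needing care are verifying that $V_0$ is genuinely empty (so nothing cancels the positive terms) and the remark that absolute values preserve supports, after which the spanning argument finishes immediately.
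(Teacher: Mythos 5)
Your proof is correct and takes essentially the same approach as the paper: observe that $V_0 = \varnothing$ (so no negative contributions), deduce that every coordinate of $\zeta(w)_\Delta$ is non-negative, and obtain strict positivity from the fact that $w(\Delta)$ is a basis (your spanning/dimension argument is precisely the content of that one line in the paper). The only extra detail you supply is the explicit identification $V_2 = \Delta$ via heights, which the paper leaves implicit but which is harmless and correct.
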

\begin{proof}
    In this case, $V_0 = \varnothing$, so we have $\zeta(w)_{\Delta}$ has non-negative coordinates for each $w \in W$. Positivity then follows from the fact that $w(\Delta)$ is a basis.
\end{proof}

We prove Theorem \ref{thm: main} by analysis of each case, relying heavily on the classification of \cite{BalaCarter}. The $A$-type case is handled by a combinatorial argument and the $B$-, $C$-, and $D$- types are proven using the standard representations of $\Sp_n$ and $\SO_n$ to reduce to the result in $A$-type. It is conceivable to us that one can make a similar reduction for the exceptional groups using their quasi-minuscule representations, but this seems to involve substantial casework. Thus we have opted to check these cases with a computer.

Finally, we briefly remark on our motivation for proving Theorem \ref{thm: main}. In forthcoming work, the first two named authors will construct a category of ``cuspidal sheaves'' on the stack of $L$-parameters. This category is generated by ``cuspidal vector bundles'' of Vogan varieties and these admit a classification analogous to the cuspidal local systems of Lusztig \cite{LusztigIntersectionCohomologyComplexes}. In working out this classification, we needed certain cases of Theorem \ref{thm: main}, and this led us to conjecture it in full generality. 

\section*{Acknowledgements}
A.B.M. would like to thank Phil Engel and David Schwein for their encouragement and helpful conversations on this problem. J.L. would like to thank the math faculty computing facility at the University of Waterloo for the use of their research Linux servers. A.B.M. was supported by the European Research Council (ERC)
under the European Union’s Horizon 2020 research and innovation programme (grant agreement no. 950326) for part of the completion of this work. T.K. was supported by JSPS KAKENHI Grant Number 24K16895. J.L. acknowledges the support of the Natural Sciences and Engineering Research Council of Canada (NSERC), [funding reference number RGPIN-2023-03726]. Cette recherche a \'et\'e partiellement financ\'ee par le Conseil de recherches en sciences naturelles et en g\'enie du Canada (CRSNG), [num\'ero de r\'ef\'erence RGPIN-2023-03726].

\section{\texorpdfstring{$A_n$}{An} case}
In the $A_n$ case, the only distinguished $\rho$ is the regular one which takes each simple root to $2$. Lemma \ref{lem: regular case} proves that these are strictly positive. It remains to prove the converse.

We now compute the $\zeta(w)_{\delta_i}$. We work with the standard presentation of the $A_n$ root system where $\Delta=\{ \bm{e}_1-\bm{e}_2, ..., \bm{e}_{n-1}-\bm{e}_n\}$ and order the simple roots as indicated. We often use the notation $\delta_i$  for  $\bm{e}_i - \bm{e}_{i+1}$.

In order to facilitate computation, we depict the sets $V_2$ and $V_0$ as edges of a certain graph. The vertices of the graph are the numbers $1, ..., n$ and the root $\bm{e}_i - \bm{e}_j$ (and its negative) will be encoded as a single edge between $i$ and $j$. To elucidate the structure of the graph corresponding to $V_2$,  we group the vertices as follows. Suppose $\bm{e}_{i_0} - \bm{e}_{i_0+1}$ is the first simple root of weight $2$. Then we define $H_0=\{1,...,i_0\}$. Similarly if $\bm{e}_{i_j} - \bm{e}_{i_j + 1}$ and $\bm{e}_{i_{j+1}} - \bm{e}_{i_{j+1} + 1}$ are the $j$th and $j+1$th simple roots of weight two, we define $H_{j}=\{ i_j + 1, ..., i_{j+1}\}$. Finally, we define all the vertices after the last $2$ to be in the group $H_m$, such that $m$ is the cardinality $\# \rho^{-1}(2)$, in analogy with the definition of $H_0$. We denote $N_i = \# H_i$.

Observe now that $V_2$ corresponds to the complete bipartite graph between adjacent groupings. In other words, we include all edges between $H_0$ and $H_1$, between $H_1$ and $H_2$, and so on. The set $V_0$ corresponds to taking the complete graphs of each of the $H_i$, though we note that we need to consider each such edge with multiplicity $2$ since a root has weight $0$ if and only if its negative does.

With this graphical interpretation, we have a simple way of computing $\zeta(w)_{\delta_i}$. When $w=1$, we simply count the number of $V_2$ edges between $\{1, ..., i\}$ and $\{i+1, ..., n\}$ and subtract twice the number of $V_0$ edges between these vertices. For $w$ general, the idea is the same but the two groups of vertices are $w^{-1}(\{1, ..., i\})$ and $w^{-1}(\{i+1, ..., n\})$. Hence proving the positivity of $\zeta(w)_{\Delta}$ for each $w \in W$ is equivalent to showing that for each splitting of $\{1, ..., n\}$ into two groups, the number of $V_2$ edges between the groups is larger than twice the number of $V_0$ edges.

Suppose now that $\rho$ gives us a partition of $\{1, ..., n\}$ into $H_1, ..., H_m$ and that for some arbitrary splitting of $\{1, ..., n\}$ into two groups, we have that $H_j$ has $a_j$ vertices in the first group and hence $N_j-a_j$ vertices in the second group. Then for any pair $(w,i)$ corresponding to this splitting, we have
\begin{equation*}
    \zeta(w)_{\delta_i} = [\sum\limits^{m}_{j=1} a_{j-1}(N_j-a_j) + a_j(N_{j-1}-a_{j-1})]  - 2[\sum\limits^{m}_{j=0} a_j(N_j-a_j)].
\end{equation*}
We rewrite this as 
\begin{equation*}
  \begin{split}
    \zeta(w)_{\delta_i} &= \sum_{j=1}^{m}
        \begin{bmatrix}
            a_{j-1} \\ N_{j-1}-a_{j-1}
        \end{bmatrix}^\top
        \begin{bmatrix}
            0 & 1  \\
            1 & 0 
        \end{bmatrix}
        \begin{bmatrix}
            a_j \\ N_j-a_j
        \end{bmatrix}
        \\
        &- \sum_{j=0}^{m} 
        \begin{bmatrix}
            a_j \\ N_j-a_j
        \end{bmatrix}^\top
        \begin{bmatrix}
            0 & 1 \\
            1 & 0 
        \end{bmatrix}
        \begin{bmatrix}
            a_j \\ N_j-a_j
        \end{bmatrix}. \\
\end{split}
\end{equation*}
Another form of this expression is given by formally defining $N_{-1}=a_{-1} = N_{m+1}=a_{m+1}=0$ and writing $\bm{v}_j = [a_j, N_j -a_j]^\top$ so that
\begin{equation*}
    \zeta(w)_{\delta_i}=-\frac{1}{2} \sum_{j=0}^{m+1} (\bm{v}_j-\bm{v}_{j-1})^\top
        \begin{bmatrix}
            0 & 1 \\
            1 & 0 
        \end{bmatrix}
        (\bm{v}_j-\bm{v}_{j-1}).
\end{equation*}

Now consider what happens if we have, for $1 \leq i \leq m$ fixed,
\begin{equation*}
    a_j = \begin{cases}
    0 &  j=-1, m+1\\
    1 & -1 < j < i\\
    0 & i  \leq j < m+1.
\end{cases}
\end{equation*}
Then
\begin{equation*}
   \zeta(w)_{\delta_i} = (N_i - N_{i-1}+1)-(N_0-1).
\end{equation*}
Hence, $\zeta(w)_{\delta_i} >0$ implies $N_i \geq N_{i-1} + (N_0-1)$. Since $N_0 -1 \geq 0$, we have $N_i \geq N_{i-1}$ with equality only if $N_0=1$. If we instead stipulate that the $a_j$ for $i \leq j < m+1$ are $1$ and the other $a_j$ are $0$, we deduce analogously that $N_{i-1} \geq N_i + (N_m -1)$. Hence if $\zeta(w)_{\delta_i}>0$ for all $i$, we must have that all $N_i=1$ for $0 \leq i \leq m$. This completes the proof of Theorem \ref{thm: main} in the $A_n$ case.

\section{Reduction to \texorpdfstring{$A$}{A}-type for classical groups} \label{sec:A-reduction}

The standard embeddings of the groups $\SO_{2n+1}, \Sp_{2n}, \SO_{2n}$ respectively into $\GL_{2n+1}, \GL_{2n}, \GL_{2n}$ (normalized such that the intersection of the diagonal torus with the image of the embedding is a maximal torus of the respective group) will allow us to prove the $B, C, D$ cases of Theorem \ref{thm: main}.

These embeddings $H \hookrightarrow G$ do not always induce maps $\Phi(G,T) \rightarrow \Phi(H,T_H)$ (for $T_H = T \cap H$), but they do induce maps of the respective root lattices.

Given such a map $f: R \to S$ between root lattices we define $f^{-1}(s)$ to be the set of \emph{roots} $r \in R$ which map to a given root $s \in S$ under $f$. We call such a map \textbf{positive} if $f^{-1}(s)$ consists of only positive roots for all positive roots $s \in S$ and \textbf{root surjective} if $f^{-1}(s)$ is non-empty for every root $s \in S$.

\begin{lemma}
    If $f: R \to S$ is positive then for every root $s \in S$ we have $f^{-1}(|s|) = |f^{-1}(s)|$, where $|X| = \{|x|: x \in X\}$.
\end{lemma}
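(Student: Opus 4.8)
The plan is to unwind the definitions on both sides and show the two sets of positive roots coincide. Fix a root $s \in S$. The key observation is that $f$ is linear, so $f(-r) = -f(r)$ for every root $r \in R$; in particular, $r \in f^{-1}(s)$ if and only if $-r \in f^{-1}(-s)$. Since $-s$ is also a root of $S$ (root systems are symmetric), this gives a bijection $f^{-1}(s) \leftrightarrow f^{-1}(-s)$ via negation, and hence $f^{-1}(|s|) = f^{-1}(s) \sqcup f^{-1}(-s)$ when $s$ is not zero (noting $s \neq -s$ for roots, so $|s|$ is exactly one of $\pm s$ and its preimage is the union of the preimages of $s$ and $-s$; more carefully, $f^{-1}(|s|)$ equals whichever of $f^{-1}(s)$, $f^{-1}(-s)$ corresponds to $|s|$, but I actually want to relate this to $|f^{-1}(s)|$, so let me reorganize).

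First I would argue the inclusion $|f^{-1}(s)| \subseteq f^{-1}(|s|)$. Take $r \in f^{-1}(s)$. Then $|r|$ is either $r$ or $-r$; in the first case $f(|r|) = f(r) = s$, and in the second $f(|r|) = f(-r) = -s$. So $f(|r|) \in \{s, -s\} = \{|s|, -|s|\}$. Now here is where positivity enters: $|r|$ is a positive root, so $f^{-1}$ applied to $f(|r|)$ — wait, I should phrase it as: if $f(|r|) = -|s|$, then since $-|s|$ could be negative, I cannot directly conclude. Instead, observe $|r| \in f^{-1}(f(|r|))$; if $f(|r|) = -|s|$ and $-|s|$ is a negative root (which it is, since $|s|$ is positive), then... positivity of $f$ says $f^{-1}$ of a positive root contains only positive roots, equivalently $f^{-1}$ of a negative root contains only negative roots (by the negation bijection). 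So $|r|$ positive forces $f(|r|)$ to be a positive root, hence $f(|r|) = |s|$, giving $|r| \in f^{-1}(|s|)$.

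Conversely, for $f^{-1}(|s|) \subseteq |f^{-1}(s)|$: take $r \in f^{-1}(|s|)$, so $f(r) = |s|$. Then $f(|r|) = \pm|s|$; since $|r|$ is positive and $f$ is positive (using the negation argument: $f$ positive implies $f$ maps negative roots to negative roots, so contrapositively a root mapping to a positive root... actually I want the forward direction — hmm, positivity is about $f^{-1}$, not $f$). Let me redo: $r \in f^{-1}(|s|)$ and $|s|$ is a positive root, so by positivity every element of $f^{-1}(|s|)$ is a positive root, hence $r$ is positive, hence $r = |r|$, and $f(r) = |s| = f(|s| \text{-preimage})$. Now I need $r \in |f^{-1}(s)|$, i.e., $|r| \in f^{-1}(s)$ or $|r| \in f^{-1}(-s) = f^{-1}(-|s|)$ up to the sign issue. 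Since $r$ is positive, $|r| = r$, and $f(r) = |s|$ which equals $s$ or $-s$; either way $r \in f^{-1}(s) \cup f^{-1}(-s)$, and since $r$ is positive, $r = |r'|$ for the corresponding $r' \in f^{-1}(s)$ (namely $r' = r$ if $f(r) = s$, or $r' = -r$ if $f(r) = -s$, and in the latter case $|r'| = |-r| = r$ since $r$ is positive). So $r \in |f^{-1}(s)|$.

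The main obstacle, such as it is, is purely bookkeeping: keeping straight the three sign operations — the absolute value $|\cdot|$ on roots, negation, and the two directions of the positivity hypothesis (stated for $f^{-1}$ but used both as "preimage of positive is positive" and its negation-dual "preimage of negative is negative") — and handling the degenerate-looking but actually harmless fact that $s$ and $-s$ are always distinct for roots so $|s|$ picks out exactly one of them. I expect the cleanest writeup to factor through the single lemma that $f$ commutes with negation and that, by positivity, $r$ and $f(r)$ have the same sign (positive/negative) for every root $r$; once that is established, both inclusions are one line each.
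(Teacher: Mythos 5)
Your argument is correct and is essentially the same as the paper's: both rest on the two facts that $f$ commutes with negation (by linearity) and that positivity of $f$ forces signs to match between a root and its image. The paper packages this as a terse set-level computation split on the sign of $s$ (treating the negative case via $f^{-1}(-s) = -f^{-1}(s)$ and $|-X| = |X|$), whereas you verify both inclusions element-by-element; the underlying ideas are identical.
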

\begin{proof}
    Let $s \in S$ be a root. If $s$ is positive then $f^{-1}(|s|) = f^{-1}(s) = |f^{-1}(s)|$ by positivity of $f$. If $s$ is negative then $-s$ is positive, which implies $f^{-1}(|s|) = f^{-1}(-s) = |f^{-1}(-s)| = |-f^{-1}(s)| = |f^{-1}(s)|$.
\end{proof}

Given a weighting function $\rho$ of $\Delta$, we abuse notation as before and let $\rho$ also refer to the function on the root lattice given by linear extension. Given a positive $f$ and a weighting $\rho$ of the simple roots of $S$, we construct a weighting $f^{-1}(\rho)$ of the simple roots of $R$ by defining $f^{-1}(\rho) = \rho \circ f$.

\begin{lemma} \label{lem:k-roots}
    Fix $f: R \to S$ and let $\rho$ be a weighting of the simple roots of $S$, with associated weighting $f^{-1}(\rho)$ of the simple roots of $R$. Then $f^{-1}(V_k(S)) \subseteq V_k(R)$ for all $k \in 2\Z$.
\end{lemma}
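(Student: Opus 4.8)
The plan is to unwind the definitions. Recall that $f^{-1}(\rho) = \rho \circ f$ as functions on the root lattice $R$, where we have abused notation to let $\rho$ denote the linear extension of the weighting to the root lattice of $S$. Thus for any $r \in R$ we have $f^{-1}(\rho)(r) = \rho(f(r))$. Now suppose $r \in f^{-1}(V_k(S))$, meaning $r$ is a root of $R$ with $f(r) \in V_k(S)$, i.e. $\rho(f(r)) = k$. Then $f^{-1}(\rho)(r) = \rho(f(r)) = k$, so $r$ is a root of $R$ with $f^{-1}(\rho)$-weight $k$, i.e. $r \in V_k(R)$. This gives the containment $f^{-1}(V_k(S)) \subseteq V_k(R)$.

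The one point that needs a word of care is that $f^{-1}(\rho)$ was defined as a weighting of the \emph{simple} roots of $R$ and then extended linearly, so I should check that this linear extension agrees with $\rho \circ f$ on all of $R$, not just on $\Delta(R)$. This is immediate: both $f^{-1}(\rho)$ (extended linearly from $\Delta(R)$) and $\rho \circ f$ are linear functionals on $R$, and they agree on the basis $\Delta(R)$ by definition of $f^{-1}(\rho)$; hence they agree everywhere. So there is no genuine obstacle here — the lemma is essentially a tautology once the notation is unpacked. Note also that this argument does not require $f$ to be positive or root surjective; it holds for any map of root lattices $f \colon R \to S$, consistent with the hypotheses as stated.

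I would write this up in two or three sentences: first note $f^{-1}(\rho) = \rho \circ f$ on all of $R$ by linearity and agreement on simple roots; then take $r \in f^{-1}(V_k(S))$, observe $r$ is a root and $f^{-1}(\rho)(r) = \rho(f(r)) = k$, and conclude $r \in V_k(R)$. The restriction to $k \in 2\Z$ plays no role in the argument; it is there only because the weightings $\rho$ take values in $\{0,2\}$ and so the only relevant weights on the root lattice are even integers.
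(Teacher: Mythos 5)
Your proof is correct and takes essentially the same approach as the paper: both reduce the statement to the observation that $f^{-1}(\rho) = \rho \circ f$ as linear functionals on $R$, since they agree on the simple roots by definition, and then unwind the definition of $V_k$. Your side remarks (that positivity and root surjectivity are not needed, and that $k \in 2\Z$ is cosmetic) are accurate but not part of the paper's argument.
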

\begin{proof}
    Let $s \in S$ be a root, and let $r \in R$ be a root such that $f(r) = s$. Thus $r = \sum_i \xi_i \delta_i$ where $\delta_i$ are simple roots and $\xi_i$ are signs. We compute
    \[
        f^{-1}(\rho)[r] = \sum_i \xi_i f^{-1}(\rho)[\delta_i] = \sum_i \xi_i \rho[f(\delta_i)] = \rho\left[\sum_i \xi_i f(\delta_i)\right] = \rho[s].
    \]
    Therefore if $s \in V_k(S)$ then $r \in V_k(R)$.
\end{proof}

Let $\phi: W'_S \to W'_R$ be a function between groups of root-preserving automorphisms of $S$ and $R$ respectively. In all cases $W'_R$ will be the Weyl group $W_R$. In the $B$- and $C$-type cases $W'_S$ will be the Weyl group $W_S$. In the $D$-type cases, $W'_S$ will be the degree $2$ extension of $W_S$ generated by the nontrivial automorphism of the Dynkin diagram. Note also that for $w \in W'_S$, we can define $\zeta(w)_{\Delta_S}$ in analogy with the $W_S$ case. For the standard embeddings of classical groups, $\phi$ will be the embedding given by considering signed permutations. The function $\phi$ is said to be \textbf{compatible with $f$} if for any root $s \in S$ and any $w \in W'_S$ we have
\[
    f^{-1}(w \cdot s) = \phi(w) \cdot f^{-1}(s).
\]

\begin{theorem} \label{thm:score}
    Let $f: R \to S$ be positive and root surjective, and let $\phi: W'_S \to W'_R$ be compatible with $f$. Then for all $w \in W'_S$ we have 
    \[
        \zeta(w)_{\Delta_S} = f\left[\sum\limits_{r \in f^{-1}(V_2(S))} \frac{|\phi(w) \cdot r|}{m_{|w \cdot f(r)|}} - \sum\limits_{r \in f^{-1}(V_0(S))} \frac{|\phi(w) \cdot r|}{m_{|w \cdot f(r)|}}\right],
    \]
    where $m_s$ denotes the number of roots in the fiber $f^{-1}(s)$.
\end{theorem}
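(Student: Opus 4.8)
The plan is to reduce the asserted identity to a per-root statement and then sum. The starting observation is an \emph{averaging} formula: since $f$ is root surjective, for every root $s \in S$ the fiber $f^{-1}(s)$ is a nonempty finite set of roots, each of which maps to $s$, so $m_s\, s = f\bigl[\sum_{r \in f^{-1}(s)} r\bigr]$ and hence $s = f\bigl[\frac{1}{m_s}\sum_{r \in f^{-1}(s)} r\bigr]$ in $S \otimes \Q$. I would apply this to the root $|w\cdot s|$: by the lemma that $f^{-1}(|s|) = |f^{-1}(s)|$ for positive $f$ we have $f^{-1}(|w\cdot s|) = |f^{-1}(w\cdot s)|$, so
\[
    |w\cdot s| = f\!\left[\frac{1}{m_{|w\cdot s|}}\sum_{u \in f^{-1}(w\cdot s)} |u|\right].
\]

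Next I would bring in compatibility of $\phi$ with $f$, which gives $f^{-1}(w\cdot s) = \phi(w)\cdot f^{-1}(s)$; since $\phi(w)$ is a root-preserving automorphism of $R$, the map $r \mapsto \phi(w)\cdot r$ is a bijection from $f^{-1}(s)$ to $f^{-1}(w\cdot s)$. Reindexing the sum above along this bijection yields the per-root formula
\[
    |w\cdot s| = f\!\left[\sum_{r \in f^{-1}(s)} \frac{|\phi(w)\cdot r|}{m_{|w\cdot s|}}\right] = f\!\left[\sum_{r \in f^{-1}(s)} \frac{|\phi(w)\cdot r|}{m_{|w\cdot f(r)|}}\right],
\]
where I used $f(r) = s$ to rewrite the denominator. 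Here I would also record the small fact that $|\cdot|$ is injective on any fiber $f^{-1}(t)$: if $|r| = |r'|$ with $r, r' \in f^{-1}(t)$ then $r' = -r$, forcing $t = f(r') = -f(r) = -t$, which is impossible for a root. This guarantees $m_{|w\cdot s|} = \# f^{-1}(w\cdot s) = \# f^{-1}(s)$ so that the reindexings are honest bijections of sets.

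To finish, since $f$ is a function its preimage decomposes as a disjoint union $f^{-1}(V_k(S)) = \bigsqcup_{s \in V_k(S)} f^{-1}(s)$, so summing the per-root formula over $s \in V_2(S)$, summing it over $s \in V_0(S)$, and subtracting gives
\[
    \zeta(w) = \sum_{s \in V_2(S)} |w\cdot s| - \sum_{s \in V_0(S)} |w\cdot s| = f\!\left[\sum_{r \in f^{-1}(V_2(S))} \frac{|\phi(w)\cdot r|}{m_{|w\cdot f(r)|}} - \sum_{r \in f^{-1}(V_0(S))} \frac{|\phi(w)\cdot r|}{m_{|w\cdot f(r)|}}\right];
\]
reading off $\Delta_S$-coordinates is the claim. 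I do not anticipate a genuine obstacle here — the argument is essentially bookkeeping — and the only points needing care are the interaction of $|\cdot|$ with the fibers of $f$ (handled by the cited lemma and the injectivity remark) and the fact that the identity a priori lives in $S \otimes \Q$, since the fiber sizes $m_s$ appear as denominators; integrality of the right-hand side then follows a posteriori from the identity.
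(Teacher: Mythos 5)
Your proposal is correct and follows essentially the same route as the paper's proof, just read in the opposite direction: the paper starts from the right-hand side, groups the sum into fibers $f^{-1}(s)$, and applies linearity of $f$, compatibility of $\phi$, and the lemma $|f^{-1}(\cdot)| = f^{-1}(|\cdot|)$ to recover $\zeta(w)_{\Delta_S}$, whereas you start from the per-root averaging identity and build up the right-hand side by summing over $V_2(S)$ and $V_0(S)$. Your explicit remark that $|\cdot|$ is injective on each fiber is a nice tightening of a point the paper treats a bit loosely (by conflating the set $|X|$ with a multiset), but it does not change the substance of the argument.
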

\begin{proof}
    Given a set $S$, we let $f(S)$ denote the multiset given by applying $f$ to each element of $S$, and we let $\sum f(S)$ denote the sum over the elements of that multiset (with multiplicity). We compute
    \[
    \begin{split}
        \text{RHS} &= \sum\limits_{r \in f^{-1}(V_2(S))} \frac{f(|\phi(w) \cdot r|)}{m_{|w \cdot f(r)|}} - \sum\limits_{r \in f^{-1}(V_0(S))} \frac{f(|\phi(w) \cdot r|)}{m_{|w \cdot f(r)|}} \\
            &= \sum\limits_{s \in V_2(S)} \frac{\sum f(|\phi(w) \cdot f^{-1}(s)|)}{m_{|w \cdot s|}} - \sum\limits_{s \in V_0(S)} \frac{\sum f(|\phi(w) \cdot f^{-1}(s)|)}{m_{|w \cdot s|}} \\
            &= \sum\limits_{s \in V_2(S)} \frac{\sum f(|f^{-1}(w \cdot s)|)}{m_{|w \cdot s|}} - \sum\limits_{s \in V_0(S)} \frac{\sum f(|f^{-1}(w \cdot s)|)}{m_{|w \cdot s|}} \\
            &= \sum\limits_{s \in V_2(S)} \frac{\sum f(f^{-1}(|w \cdot s|))}{m_{|w \cdot s|}} - \sum\limits_{s \in V_0(S)} \frac{\sum f(f^{-1}(|w \cdot s|))}{m_{|w \cdot s|}} \\
            &= \sum\limits_{s \in V_2(S)} |w \cdot s| - \sum\limits_{s \in V_0(S)} |w \cdot s| \\
            &= \zeta(w)_{\Delta_S},
    \end{split}
    \]
    as desired. 
\end{proof}
We opt for the notation $\zeta_R$ when we want to clarify which root lattice we are computing $\zeta$ of.

\begin{corollary} \label{cor:score-coeffs}
    Let $f: R \to S$ be positive and root surjective, let $\phi: W'_S \to W'_R$ be compatible with $f$, and suppose $m_s = m$ for all roots $s \in S$. Define $U_k := V_k(R) \setminus f^{-1}(V_k(S))$ for all $k$. Then for all $w \in W'_S$ and all simple roots $\gamma \in S$ we have
    \[
        \zeta_S(w)_\gamma = \frac{1}{m} \sum_{\delta \in \Delta_R} f(\delta)_\gamma \left[\zeta_R(\phi(w))_\delta - \left(\sum\limits_{r \in U_2} |\phi(w) \cdot r|_\delta - \sum\limits_{r \in U_0} |\phi(w) \cdot r|_\delta\right)\right],
    \]
    where $r_\delta$ denotes the $\delta$ coefficient of $r \in R$.
\end{corollary}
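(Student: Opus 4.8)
The plan is to start directly from Theorem~\ref{thm:score} and rewrite its right-hand side into the claimed shape; there is no genuinely new idea, only careful bookkeeping of the fibers of $f$ and of coordinate extraction. First I would specialize Theorem~\ref{thm:score} to the hypothesis $m_s = m$ for every root $s \in S$: then all the denominators $m_{|w \cdot f(r)|}$ equal $m$, and the theorem becomes
\[
    \zeta_S(w)_{\Delta_S} = \frac{1}{m}\, f\!\left[\sum_{r \in f^{-1}(V_2(S))} |\phi(w)\cdot r| - \sum_{r \in f^{-1}(V_0(S))} |\phi(w)\cdot r|\right].
\]

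Next I would rewrite the two index sets. By Lemma~\ref{lem:k-roots}, $f^{-1}(V_k(S)) \subseteq V_k(R)$, so by the definition $U_k = V_k(R) \setminus f^{-1}(V_k(S))$ the roots of $V_k(R)$ split as the disjoint union of $f^{-1}(V_k(S))$ and $U_k$. Substituting $\sum_{r \in f^{-1}(V_k(S))} = \sum_{r \in V_k(R)} - \sum_{r \in U_k}$ for $k = 0, 2$ and regrouping, the bracketed vector becomes
\[
    \left(\sum_{r \in V_2(R)} |\phi(w)\cdot r| - \sum_{r \in V_0(R)} |\phi(w)\cdot r|\right) - \left(\sum_{r \in U_2} |\phi(w)\cdot r| - \sum_{r \in U_0} |\phi(w)\cdot r|\right).
\]
Since $\phi(w) \in W'_R = W_R$, the first parenthesized term is by definition $\zeta_R(\phi(w))$, and we obtain the vector identity
\[
    \zeta_S(w) = \frac{1}{m}\, f\!\left[\zeta_R(\phi(w)) - \left(\sum_{r \in U_2} |\phi(w)\cdot r| - \sum_{r \in U_0} |\phi(w)\cdot r|\right)\right]
\]
in the root lattice of $S$.

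Finally I would extract the $\gamma$-coordinate. For any $x$ in the root lattice of $R$, expanding $x = \sum_{\delta \in \Delta_R} x_\delta\, \delta$ and using linearity of $f$ gives $f(x)_\gamma = \sum_{\delta \in \Delta_R} x_\delta\, f(\delta)_\gamma$. Applying this to $x = \zeta_R(\phi(w)) - \bigl(\sum_{r \in U_2} |\phi(w)\cdot r| - \sum_{r \in U_0} |\phi(w)\cdot r|\bigr)$, whose $\delta$-coefficient is $\zeta_R(\phi(w))_\delta - \bigl(\sum_{r \in U_2} |\phi(w)\cdot r|_\delta - \sum_{r \in U_0} |\phi(w)\cdot r|_\delta\bigr)$, yields precisely the asserted formula. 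The only points that need care are the disjointness claim $V_k(R) = f^{-1}(V_k(S)) \sqcup U_k$, which is exactly where Lemma~\ref{lem:k-roots} enters, and the fact that every quantity in play is a genuine vector in a root lattice, so that $f$ and coordinate extraction both commute with the finite sums; beyond this there is no substantive obstacle.
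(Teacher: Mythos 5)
Your proof is correct and follows exactly the same route as the paper: specialize Theorem~\ref{thm:score} under the constant-fiber hypothesis, use Lemma~\ref{lem:k-roots} to split $V_k(R)$ as $f^{-1}(V_k(S)) \sqcup U_k$ and thereby recognize $\zeta_R(\phi(w))$, then extract the $\gamma$-coordinate via $f(x)_\gamma = \sum_{\delta \in \Delta_R} x_\delta\, f(\delta)_\gamma$. There is nothing to correct.
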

\begin{proof}
    By Theorem~\ref{thm:score} and Lemma~\ref{lem:k-roots} we have
    \[
    \begin{split}
        m \cdot \zeta_S(w) &= f\left[\sum\limits_{r \in f^{-1}(V_2(S))} |\phi(w) \cdot r| - \sum\limits_{r \in f^{-1}(V_0(S))} |\phi(w) \cdot r|\right] \\
            &= f\left[\zeta_R(\phi(w)) - \left(\sum\limits_{r \in U_2} |\phi(w) \cdot r| - \sum\limits_{r \in U_0} |\phi(w) \cdot r|\right)\right].
    \end{split}
    \]
    Thus by linearity of $y \mapsto y_\gamma$, we only need to show
    \begin{equation}{\label{eqn: coefficient formula}}
         f(x)_\gamma = \sum_{\delta \in \Delta_R} f(\delta)_\gamma \cdot x_\delta,
    \end{equation}
    for all $x \in R$. Since $x = \sum_{\delta \in \Delta_R} x_\delta \cdot \delta$, the result then follows from linearity of $f$ and of $y \mapsto y_\gamma$.
\end{proof}

\section{\texorpdfstring{$B_n$}{Bn} case}

We now analyze the $B_n$ case. We work with the presentation of the $B_n$ root system with short roots $\pm \bm{e}_i$ for all $i$ and long roots $\pm \bm{e}_i \pm \bm{e}_j$ for $i \neq j$, and define the ordered set $\Delta_{B_n} := \{ \bm{e}_1 - \bm{e}_2, ... , \bm{e}_{n-1} - \bm{e}_n, \bm{e}_n\}$. We also denote the elements of $\Delta_{B_n}$ as $\gamma_1,\ldots,\gamma_n$ in the order given above.

Let $R = \Z \cdot \Phi(\GL_{2n+1}, T)$ and $S = \Z \cdot \Phi(\SO_{2n+1}, T_{\SO_{2n+1}})$. Recall that the Weyl group of $B_n$ is the set of signed permutations of $[n]$. We embed this into $W_R = W_{A_{2n}} = S_{2n+1}$ by considering $W_S = W_{B_n}$ to be the set of permutations $\pi$ of
\begin{equation}{\label{eqn: B-type permutations}}
        \{1,2,\ldots,n-1,n,0,-n,-(n-1),\ldots,-2,-1\}
\end{equation}
for which $\pi(0) = 0$ and $\pi(i) = j$ implies $\pi(-i) = -j$. Call this embedding $\phi: W_{B_n} \to W_{A_{2n}}$. We further have a linear map $f$, mapping the root lattice of $A_{2n}$ onto the root lattice of $B_n$, given via
\[
    f(\bm{e}_i) := \begin{cases}
        \bm{e}_i, & i \leq n \\
        \bm{0}, & i = n+1 \\
        -\bm{e}_{2n+2-i}, & i \geq n+2
    \end{cases}.
\]
Note that for $1 \leq i < j \leq n$ we have, when restricted to roots,
\[
    f^{-1}(\bm{e}_i - \bm{e}_j) = \{\bm{e}_i - \bm{e}_j, \bm{e}_{2n+2-j} - \bm{e}_{2n+2-i}\}
\]
and
\[
    f^{-1}(\bm{e}_i + \bm{e}_j) = \{\bm{e}_i - \bm{e}_{2n+2-j}, \bm{e}_j - \bm{e}_{2n+2-i}\},
\]
and for $1 \leq i \leq n$ we have
\[
    f^{-1}(\bm{e}_i) = \{\bm{e}_i - \bm{e}_{n+1}, \bm{e}_{n+1} - \bm{e}_{2n+2-i}\}.
\]
(Note that $f$ does not map roots to roots; for example we have $f(\bm{e}_i-\bm{e}_{2n+2-i}) = 2\bm{e}_i$ for all $1 \leq i \leq n$.) Thus $f$ is positive and root surjective, and the embedding $\phi$ is compatible with $f$ as defined above. Note further that $m_s = 2$ for all positive roots $s$ of $B_n$.

Now fix a weight function $\rho$ on $\Delta_{B_n}$ so that $f^{-1}(\rho)$ is the associated weight function for $\Delta_{A_{2n}}$. Note that for any simple root $\gamma \in \Delta_{B_n}$ there exist $\delta_1,\delta_2 \in \Delta_{A_{2n}}$ such that for any $\delta \in \Delta_{A_{2n}}$, we have $f(\delta)_\gamma = 1$ if and only if $\delta \in \{\delta_1,\delta_2\}$ and $f(\delta)_\gamma = 0$ otherwise, where $\delta_1$ and $\delta_2$ are the $i$th and $(2n+1-i)$th simple roots of $A_{2n}$ for some $i$. We let $\delta_1$ be the left-most simple root. Thus for any $w \in W_{B_n}$ and any simple root $\gamma \in \Delta_{B_n}$, Corollary~\ref{cor:score-coeffs} implies
\[
     \zeta_{B_n}(w)_\gamma = \frac{1}{2} \sum_{\delta \in \{\delta_1,\delta_2\}} \left[\zeta_{A_{2n}}(\phi(w))_\delta - \left(\sum\limits_{r \in U_2} |\phi(w) \cdot r|_\delta - \sum\limits_{r \in U_0} |\phi(w) \cdot r|_\delta\right)\right].
\]
Recall that $U_k := V_k(A_{2n}) \setminus f^{-1}(V_k(B_n))$ for all $k$, and note that $U_2$ and $U_0$ are precisely the roots in $V_2(A_{2n})$ and $V_0(A_{2n})$ which are ``symmetric'' about $n+1$. In this case $U_2$ is always empty, and thus we have
\[
    \zeta_{B_n}(w)_\gamma = \frac{1}{2} \sum_{\delta \in \{\delta_1,\delta_2\}} \left[\zeta_{A_{2n}}(\phi(w))_\delta + \sum\limits_{r \in U_0} |\phi(w) \cdot r|_\delta\right].
\]
Now let $H_0,\ldots,H_m$ denote the partition of $[n]$ determined by the locations of the $2$s in $\rho$, as in $A$-type (where $m = \#\rho^{-1}(2)$). However, note that we are not including $0$ (as it appears in \eqref{eqn: B-type permutations}) in the partition, so if $\rho(\gamma_n) = 2$ then we have $N_m = \# H_m = 0$. Thus $N_j = \#H_j \geq 1$ for all $1 \leq j \leq m-1$ and $N_m = 0$ if and only if $\rho(\gamma_n) = 2$. We also set $H_{-1} = \varnothing$ for ease of notation. Thus in $A_{2n}$, the sizes of the associated partition $\{H^A_j\}$ are given by 
\begin{equation*}
N^A_j = \# H^A_j = \begin{cases} 
    0 & j = -1, 2m+1 \\
    N_j & 0 \leq j < m \\
    2N_m+1 & j=m \\
    N_{2m-j} & m < j \leq 2m.
\end{cases}
\end{equation*}

For each block $H^A_j$, we let $a_j$ denote the number of elements in block $j$ which are permuted by $\phi(w)$ to the left of $\delta_1$ and let $b_j$ denote the number of elements in block $j$ which are permuted by $\phi(w)$ to the right of $\delta_2$. Letting $\bm{u}_j = [a_j, N^A_j-a_j]^\top$ and $\bm{v}_j = [b_j, N^A_j-b_j]^\top$, we thus have
\[
    \zeta_{A_{2n}}(\phi(w))_{\delta_1} = -\frac{1}{2} \sum_{j=0}^{2m+1} (\bm{u}_j-\bm{u}_{j-1})^\top
        \begin{bmatrix}
            0 & 1 \\
            1 & 0 
        \end{bmatrix}
        (\bm{u}_j-\bm{u}_{j-1})
\]
and
\[
    \zeta_{A_{2n}}(\phi(w))_{\delta_2} = -\frac{1}{2} \sum_{j=0}^{2m+1} (\bm{v}_j-\bm{v}_{j-1})^\top
        \begin{bmatrix}
            0 & 1 \\
            1 & 0 
        \end{bmatrix}
        (\bm{v}_j-\bm{v}_{j-1}).
\]
Now $U_0$ is precisely the set of ``symmetric'' positive roots of $A_{2n}$ contained in the middle $m$th block of size $N^A_m=2N_m+1$. Thus there are precisely $N_m$ such positive roots. Since $\phi(w)$ acts symmetrically on the elements of $[2n+1]$, we thus have
\[
    \sum_{r \in U_0} |\phi(w) \cdot r|_{\delta_1} = 2a_m \quad \text{and} \quad \sum_{r \in U_0} |\phi(w) \cdot r|_{\delta_2} = 2b_m,
\]
and further that $a_j = b_{2m-j}$ and $N^A_j = N^A_{2m-j}$ for all $j$. This implies $\bm{v}_j = \bm{u}_{2m-j}$, which gives our final formula in $B$-type
\begin{equation}{\label{eqn: B-type final eqn}}
        \zeta_{B_n}(w)_\gamma = -\frac{1}{2} \sum_{j=0}^{2m+1} (\bm{u}_j-\bm{u}_{j-1})^\top
        \begin{bmatrix}
            0 & 1 \\ 1 & 0
        \end{bmatrix}
        (\bm{u}_j-\bm{u}_{j-1}) + 2a_m. 
\end{equation}

\subsection{Proof of Theorem \ref{thm: main} in $B$-type}
In this subsection, we complete the proof of Theorem \ref{thm: main} in the $B$-type cases. We begin with two preparatory lemmas.

\begin{lemma} \label{lem:matrix-ineq}
    Given any integer vector $\bm{u} \in \Z^2$ such that $u_1 + u_2 \in \{-1,0,1\}$, we have that
    \[
        \bm{u}^\top \begin{bmatrix}
            0 & 1 \\ 1 & 0
        \end{bmatrix}
        \bm{u} \leq 0
    \]
    with equality if and only if $\bm{u} \in \{\bm{0},\pm\bm{e}_1,\pm\bm{e}_2\}$.
\end{lemma}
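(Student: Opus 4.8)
The plan is to make the quadratic form completely explicit and then run a short case analysis on the value of $u_1+u_2$. First I would record that for $\bm{u} = [u_1,u_2]^\top$ one has
\[
    \bm{u}^\top \begin{bmatrix} 0 & 1 \\ 1 & 0 \end{bmatrix} \bm{u} = 2u_1u_2,
\]
so the assertion is equivalent to: $u_1u_2 \le 0$ whenever $u_1+u_2 \in \{-1,0,1\}$, with equality exactly when $\bm{u} \in \{\bm{0},\pm\bm{e}_1,\pm\bm{e}_2\}$.

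Next I would set $s := u_1+u_2 \in \{-1,0,1\}$ and substitute $u_2 = s-u_1$, so that $u_1u_2 = u_1(s-u_1)$. The key elementary point is that for an integer $u_1$ and $s \in \{-1,0,1\}$, the integers $u_1$ and $s-u_1$ cannot both be strictly positive nor both strictly negative: if $u_1 \ge 1$ then $s-u_1 \le s-1 \le 0$, and if $u_1 \le -1$ then $s-u_1 \ge s+1 \ge 0$. Hence $u_1(s-u_1) \le 0$, which is the desired inequality.

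For the equality statement, $u_1(s-u_1)=0$ forces $u_1 = 0$ or $u_1 = s$. When $s=0$ this yields $\bm{u}=\bm{0}$; when $s=1$ it yields $\bm{u}=\bm{e}_2$ (from $u_1=0$) or $\bm{u}=\bm{e}_1$ (from $u_1=1$); when $s=-1$ it yields $\bm{u}=-\bm{e}_2$ or $\bm{u}=-\bm{e}_1$. Conversely each of these five vectors satisfies the hypothesis and annihilates the form, so the characterization is exact.

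I do not expect a genuine obstacle: once the form is rewritten as $2u_1u_2$ the statement is a finite check. The only subtlety is that the integrality hypothesis is essential and must not be dropped — for instance $\bm{u}=[\tfrac12,\tfrac12]^\top$ satisfies $u_1+u_2=1$ yet $u_1u_2=\tfrac14>0$ — so the case analysis should be phrased to use $s-u_1 \le 0$ (resp. $\ge 0$) via the integer steps $u_1 \ge 1$ (resp. $u_1 \le -1$), rather than any continuous argument.
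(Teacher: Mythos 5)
Your proof is correct and follows essentially the same route as the paper's: both reduce the quadratic form to $2u_1u_2$ and then use integrality to rule out $u_1$ and $u_2$ being nonzero of the same sign. Your write-up is simply more explicit, working through the substitution $u_2 = s - u_1$ and enumerating the equality cases, whereas the paper compresses the same sign argument into a single sentence.
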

\begin{proof}
    The expression above is equal to $2u_1u_2$. If $u_1 + u_2 \in \{-1,0,1\}$ but $\bm{u} \not\in \{\bm{0},\pm\bm{e}_1,\pm\bm{e}_2\}$ then the entries of $\bm{u}$ must be non-zero and of differing signs.
\end{proof}

\begin{lemma} \label{lem:B-wt-corresp}
    Let $\bm{a}  = (a_0,\ldots,a_{2m})$ be any choice for which $0 \leq a_i \leq N_i^A$ for all $i$. Then $\bm{a}$ comes from a choice of $w \in W_{B_n}$ and $\gamma \in \Delta_{B_n}$ if and only if $a_i > 0$ for some $i$ and $a_i + a_{2m-i} \leq N_i^A$ for all $i$.
\end{lemma}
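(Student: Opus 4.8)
The plan is to read off the combinatorial content of the vector $\bm a = (a_0,\dots,a_{2m})$ from a pair $(w,\gamma)$ and then prove the two implications separately: necessity by a short bookkeeping argument, sufficiency by an explicit construction. Throughout I write $\sigma$ for the involution $p \mapsto 2n+2-p$ of the position set $\{1,\dots,2n+1\}$; recall that $\phi(W_{B_n})$ is exactly the set of permutations of $\{1,\dots,2n+1\}$ that commute with $\sigma$ and fix the middle position $n+1$, that $\sigma(H^A_j) = H^A_{2m-j}$ (so $N^A_j = N^A_{2m-j}$), and that $\sum_{j=0}^{2m} N^A_j = 2n+1$. The simple root $\gamma \in \Delta_{B_n}$ corresponds, via the pair $\delta_1,\delta_2 \in \Delta_{A_{2n}}$ over it, to an index $i$ with $1 \le i \le n$, where $\delta_1$ separates $L := \{1,\dots,i\}$ from its complement and $\delta_2$ separates $R := \{2n+2-i,\dots,2n+1\} = \sigma(L)$ from its complement; since $i \le n$ the sets $L$ and $R$ are disjoint. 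Finally $a_j = \#(H^A_j \cap \phi(w)^{-1}(L))$, and I introduce the auxiliary quantity $b_j = \#(H^A_j \cap \phi(w)^{-1}(R))$.

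For necessity, suppose $\bm a$ arises from some $(w,\gamma)$. Since $\phi(w)^{-1}$ commutes with $\sigma$, and $\sigma$ interchanges $L$ and $R$ and carries $H^A_j$ to $H^A_{2m-j}$, applying $\sigma$ to the set counted by $b_j$ gives $b_j = a_{2m-j}$. Because $L$ and $R$ are disjoint, so are $\phi(w)^{-1}(L)$ and $\phi(w)^{-1}(R)$, whence $a_j + a_{2m-j} = a_j + b_j \le \#H^A_j = N^A_j$ for every $j$. And $\sum_j a_j = \#\phi(w)^{-1}(L) = i \ge 1$, so some $a_j$ is positive.

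For sufficiency, assume $a_i + a_{2m-i} \le N^A_i$ for all $i$ and $a_j > 0$ for some $j$, and set $i := \sum_{j=0}^{2m} a_j$. Then $i \ge 1$, while $2i = \sum_j (a_j + a_{2m-j}) \le \sum_j N^A_j = 2n+1$ forces $i \le n$; let $\gamma$ be the simple root of $B_n$ with index $i$, and put $L = \{1,\dots,i\}$, $R = \sigma(L)$. It is enough to produce a set $A \subseteq \{1,\dots,2n+1\}$ with $\#(A \cap H^A_j) = a_j$ for all $j$ and $A \cap \sigma(A) = \varnothing$, together with a signed permutation $g$ (one commuting with $\sigma$ and fixing $n+1$) satisfying $g(L) = A$: the element $w \in W_{B_n}$ with $\phi(w)^{-1} = g$ then realizes $\bm a$ with this $\gamma$. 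To build $A$: for each $j < m$ the blocks $H^A_j, H^A_{2m-j}$ are disjoint with $\sigma$ a bijection between them, and $a_j + a_{2m-j} \le N^A_j = N^A_{2m-j}$ lets one choose $A_j \subseteq H^A_j$ of size $a_j$ and $A_{2m-j} \subseteq H^A_{2m-j}$ of size $a_{2m-j}$ with $A_j \cap \sigma(A_{2m-j}) = \varnothing$; for $j = m$ the block $H^A_m$ has the odd size $2N_m+1$ with unique $\sigma$-fixed point $n+1$, and taking $i = m$ in the hypothesis gives $2a_m \le N^A_m$, hence $a_m \le N_m$, so $A_m$ can be chosen as one point from each of $a_m$ distinct $\sigma$-orbits inside $H^A_m \setminus \{n+1\}$; then $A := \bigcup_j A_j$ works. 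To build $g$: pick any bijection $L \to A$, extend it $\sigma$-equivariantly to $R = \sigma(L) \to \sigma(A)$, and extend once more to a $\sigma$-equivariant bijection fixing $n+1$ from the middle interval $\{i+1,\dots,2n+1-i\}$ onto $\{1,\dots,2n+1\} \setminus (A \cup \sigma(A))$ — both of these are $\sigma$-stable sets of the same odd cardinality $2n+1-2i$ whose only $\sigma$-fixed point is $n+1$, so such a bijection exists. The resulting $g$ commutes with $\sigma$ and fixes $n+1$, so it lies in $\phi(W_{B_n})$, as required.

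The $\sigma$-equivariance manipulations, the identity $\sum_j N^A_j = 2n+1$, and aligning the $w$-versus-$w^{-1}$ convention with the definition of $a_j$ are routine. The step needing the most care is the sufficiency construction: verifying that the local choices of the $A_j$ are simultaneously consistent — especially for the odd middle block $H^A_m$, where the inequality $a_i + a_{2m-i} \le N^A_i$ at $i = m$ is precisely what yields $a_m \le N_m$ — and that the partial bijection $L \to A$ genuinely extends to an honest signed permutation of $\{1,\dots,2n+1\}$.
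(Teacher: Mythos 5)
Your proof is correct and follows essentially the same approach as the paper: the forward direction is the same symmetry/disjointness argument, and the converse constructs a signed permutation realizing the given counts. The only difference is cosmetic — you work with the involution $\sigma$ on $\{1,\dots,2n+1\}$ and split the construction into ``build the image set $A$'' then ``extend $\sigma$-equivariantly,'' whereas the paper builds the signed permutation of $\{1,\dots,n,0,-n,\dots,-1\}$ directly by a greedy left-to-right fill; your version makes the $\sigma$-equivariance and the treatment of the odd middle block a bit more explicit, but it is the same underlying argument.
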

\begin{proof}
    $(\implies)$. Given $w \in W_{B_n}$ and $\gamma \in \Delta_{B_n}$, let $\pi$ be the associated signed permutation of $\{1,\ldots,n,0,-n,\ldots,-1\}$ and let $k$ be such that $\bm{e}_k-\bm{e}_{k+1}$ is the left-most simple root of $A_{2n}$ associated to $\gamma$. Thus $\sum_{i=0}^{2m} a_i = k > 0$ which implies $a_i > 0$ for some $i$. Further, $a_i$ elements of $H_i^A$ are mapped by $\pi$ to $\{1,\ldots,k\}$ for all $i$, and since $\pi$ is a signed permutation this implies $a_i$ elements of $H_{2m-i}^A$ are mapped by $\pi$ to $\{-k,\ldots,-1\}$ for all $i$. Therefore $a_i + a_{2m-i}$ elements of $H_i^A$ are mapped to $\{1,\ldots,k\} \cup \{-k,\ldots,-1\}$ for all $i$. This implies $a_i + a_{2m-i} \leq N_i^A$ for all $i$, which completes this direction of the proof.
    
    $(\impliedby)$. If the given $\bm{a}$ comes from a choice of $\gamma \in \Delta_{B_n}$ then $\gamma$ corresponds to two symmetric simple roots of $A_{2n}$; specifically, the left-most simple root is $\bm{e}_k-\bm{e}_{k+1}$ where $k = \sum_{i=0}^{2m} a_i$. Note that this is a valid choice of $k$ since
    \[
        1 \leq k = \sum_{i=0}^{2m} a_i \leq \frac{1}{2} \sum_{i=0}^{2m} N_i^A = n + \frac{1}{2}
    \]
    by our assumptions, which implies $1 \leq k \leq n$ since $k$ is an integer.
    
    We now construct a signed permutation $\pi$ of $(1,\ldots,n,0,-n,\ldots,-1)$ as follows. Iterating over the blocks $H_0^A,H_1^A,\ldots,H_{2m}^A$ in order from left to right, map the first $a_i$ elements of block $i$ to the left-most elements which have not yet been mapped to. Since $k \leq n$, these elements have been have all been mapped to positive elements, which implies the right-most $a_{2m-i}$ elements of block $i$ have been mapped to negative elements (since we are constructing a signed permutation). Finally, iterating over the blocks $H_0^A,H_1^A,\ldots,H_{2m}^A$ in order from left to right again, map the remaining $N_i^A-a_i-a_{2m-i}$ elements of block $i$ to the left-most elements which have not yet been mapped to. Note that this construction automatically implies that $\pi(j) = k+\ell \implies \pi(-j) = -k-\ell$ for all these remaining elements by symmetry. Thus $\pi$ is a valid signed permutation.
\end{proof}

\setcounter{MaxMatrixCols}{11}

\begin{example}
    Here's an example of the implementation of the signed permutation construction of the previous proof. Suppose $m=2, n=5$ and $N_0^A = 1, N_1^A = 2, N_2^A = 5, N_3^A = 2, N_4^A = 1$ and $a_0 = 0, a_1 = 0, a_2 = 2, a_3 = 1, a_4 = 0$. Thus the first step of the construction gives
    \[
        \begin{pmatrix}
            1 & 2 & 3 & 4 & 5 & 0 & -5 & -4 & -3 & -2 & -1 \\
              &   &   & \textcolor{blue}{1} & \textcolor{blue}{2} &   &    &    & \textcolor{blue}{3} &    &    \\
        \end{pmatrix},
    \]
    with symmetric elements filled in as
    \[
        \begin{pmatrix}
            1 & 2 & 3 & 4 & 5 & 0 & -5 & -4 & -3 & -2 & -1 \\
              &   & \textcolor{blue}{-3} & 1 & 2 &   & \textcolor{blue}{-2} & \textcolor{blue}{-1} & 3 &    &    \\
        \end{pmatrix}.
    \]
    The second step of the construction then gives
    \[
        \begin{pmatrix}
            1 & 2 & 3 & 4 & 5 & 0 & -5 & -4 & -3 & -2 & -1 \\
            \textcolor{blue}{4} & \textcolor{blue}{5} & -3 & 1 & 2 & \textcolor{blue}{0} & -2 & -1 & 3 & \textcolor{blue}{-5} & \textcolor{blue}{-4} \\
        \end{pmatrix}.
    \]
    This is a signed permutation with the desired properties.
\end{example}

We now proceed with the proof of Theorem \ref{thm: main}. Recall from \cite{BalaCarter} that $\rho$ is distinguished precisely when $N_0=1$, $N_{i-1} \leq N_i \leq N_{i-1}+1$ for $1 \leq i \leq m-1$ and 
\[
    N_m = \begin{cases}
        \frac{N_{m-1}-1}{2}, & N_{m-1} \text{ odd} \\
        \frac{N_{m-1}}{2}, & N_{m-1} \text{ even},
    \end{cases}
\]
where this last condition is equivalent to $N^A_{m-1} \leq N^A_m \leq N^A_{m-1}+1$. In particular, note that except for in the regular case, this implies $\rho(\gamma_n) = 0$. We now deduce that in each distinguished case, $\zeta_{B_n}(w)_{\Delta}$ is strictly positive for all $w \in W_{B_n}$. Indeed, applying Lemma~\ref{lem:matrix-ineq} to \eqref{eqn: B-type final eqn} implies $\zeta_{B_n}(w)_\gamma \geq 0$ in such cases, with equality if and only if $a_m = 0$ and $\bm{u}_i - \bm{u}_{i-1} \in \{\bm{0},\bm{e}_1,\bm{e}_2\}$ for all $1 \leq i \leq m$ and $\bm{u}_i - \bm{u}_{i-1} \in \{\bm{0},-\bm{e}_1,-\bm{e}_2\}$ for all $m+1 \leq i \leq 2m$. This implies $a_i = 0$ for all $0 \leq i \leq 2m$, which does not correspond to a choice of $w$ and $\gamma$ by Lemma~\ref{lem:B-wt-corresp}. 

We now prove the converse. Suppose for some $0 \leq s \leq m$ we have $N^A_s \leq N^A_{s-1} - 1$ and let $s$ be minimal with this property and observe $N_{-1} = 0$ implies $s \geq 1$. Hence, $N_i = N^A_i \geq 1$ for all $0 \leq i \leq s-1$. Set $a_i = 1$ for $1 \leq i \leq s-1$, and set $a_i = 0$ for all other values of $i$. Thus $a_i + a_{2m-i} \leq 1 \leq N_i^A$ for all $i$, and therefore $\bm{a}$ corresponds to a choice of $w$ and $\gamma$ by Lemma \ref{lem:B-wt-corresp}. We now compute
\[
\begin{split}
    \zeta_{B_n}(w)_\gamma &= -\frac{1}{2} (\bm{u}_0-\bm{u}_{-1})^\top
        \begin{bmatrix}
            0 & 1 \\ 1 & 0
        \end{bmatrix}
        (\bm{u}_0-\bm{u}_{-1}) \\
        &- \frac{1}{2} (\bm{u}_s-\bm{u}_{s-1})^\top
        \begin{bmatrix}
            0 & 1 \\ 1 & 0
        \end{bmatrix}
        (\bm{u}_s-\bm{u}_{s-1}) + 2a_m \\
        &= -(N_0^A-1) + (N_s^A - N_{s-1}^A + 1) \\
        &\leq 0.
\end{split}
\]

The remaining case to consider is where $N^A_i \geq N^A_{i-1}$ for all $0 \leq i \leq m$, and for some $0 \leq s \leq m$ we have $N^A_s \geq N^A_{s-1}+2$. Let $s$ be maximal with this property. Thus $N^A_j \geq 2$ for all $s \leq j \leq 2m-s$. Now set $a_i = 0$ for $i < s$ and $i > 2m-s$, and set $a_i = 1$ for $s \leq i \leq 2m-s$. Thus $a_i + a_{2m-i} = 2 \leq N_i^A$ for all $s \leq i \leq 2m-s$ and $a_i + a_{2m-i} = 0$ otherwise, and therefore $\bm{a}$ corresponds to a choice of $w$ and $\gamma$ by Lemma \ref{lem:B-wt-corresp}. We now compute
\[
\begin{split}
    \zeta_{B_n}(w)_\gamma &= -\frac{1}{2} (\bm{u}_s-\bm{u}_{s-1})^\top
        \begin{bmatrix}
            0 & 1 \\ 1 & 0
        \end{bmatrix}
        (\bm{u}_s-\bm{u}_{s-1}) \\
        &- \frac{1}{2} (\bm{u}_{2m-s+1}-\bm{u}_{2m-s})^\top
        \begin{bmatrix}
            0 & 1 \\ 1 & 0
        \end{bmatrix}
        (\bm{u}_{2m-s+1}-\bm{u}_{2m-s}) + 2a_m \\
        &= 2 - (N_s^A-N_{s-1}^A-1) + (N_{s-1}^A - N_s^A + 1) \\
        &\leq 2 - 2(2-1) = 0.
\end{split}
\]

\section{\texorpdfstring{$C_n$}{Cn} case}

We now repeat the above analysis in the $C_n$ case. We work with the presentation of the $C_n$ root system with short roots $\pm \bm{e}_i \pm \bm{e}_j$ for $i \neq j$ and long roots $\pm 2\bm{e}_i$, for $1 \leq i, j \leq n$ and define the ordered set $\Delta_{C_n} := \{ \bm{e}_1 - \bm{e}_2, ... , \bm{e}_{n-1} - \bm{e}_n, 2\bm{e}_n\}$. We also denote the elements of $\Delta_{C_n}$ as $\gamma_1,\ldots,\gamma_n$ in the order given above.

We embed $\phi: W_{C_n} \hookrightarrow W_{A_{2n-1}}$ as signed permutations of $\{ 1, ..., n, -n, ..., -1\}$. Let $R=\Z \cdot \Phi(\GL_{2n}, T)$ and $S = \Z \cdot \Phi(\Sp_{2n}, T_{\Sp_{2n}})$. The map $f: R \to S$ is defined by
\begin{equation*}
    f(\bm{e}_i) =\begin{cases}
    \bm{e}_i & i \leq n \\
    -\bm{e}_{2n+1 -i} & i > n.
\end{cases}
\end{equation*}
Hence $f^{-1}(\bm{e}_i - \bm{e}_j) = \{ \bm{e}_i - \bm{e}_j , \bm{e}_{2n+1-j} - \bm{e}_{2n+1-i}\}$ and $f^{-1}(\bm{e}_i + \bm{e}_j) = \{ \bm{e}_i-\bm{e}_{2n+1-j}, \bm{e}_j - \bm{e}_{2n+1 - i}\}$ and $f^{-1}(2\bm{e}_i) = \{ \bm{e}_i - \bm{e}_{2n+1-i}\}$. Thus $f$ is positive and root surjective and compatible with $\phi$, but the multiplicities are not constant as opposed to the $B_n$-type case. However, $f$ does map roots to roots and hence $U_k= \varnothing$ for all $k$.

We let $V_{2,s}(S)$ denote the short roots in $V_2(S)$ and $V_{2, l}(S)$ denote the long roots. Similarly, we denote by $V_{2, s}(R)$ the elements of $V_2(R)$ that map to short roots under $f$ and analogously for the other notation. Then Theorem \ref{thm:score} gives (using that roots map to roots)
\begin{align*}
      \zeta(w)_{\Delta_{C_n}} &= f\left[\sum\limits_{r \in V_{2,l}(R)} |\phi(w) \cdot r| + \sum\limits_{r \in V_{2,s}(R)} \frac{|\phi(w) \cdot r|}{2} \right]\\
      &- f \left[\sum\limits_{r \in V_{0,l}(R)} |\phi(w) \cdot r| + \sum\limits_{r \in V_{0,s}(R)} \frac{| \phi(w) \cdot r |}{2} \right].
\end{align*}
For any simple roots $\delta \in \Delta_{A_{2n-1}}$ and $\gamma \in \Delta_{C_n}$, we have $f(\delta)_{\gamma} = 1$ if and only if $\delta \in f^{-1}(\gamma)$ and $f(\delta)_{\gamma} = 0$ otherwise. Thus by linearity of $f$ and $(\cdot)_{\gamma}$, we get
 
\begin{equation}{\label{eqn: Cn prelim formula}}
     \zeta(w)_{\gamma} = \frac{1}{2} \sum\limits_{\delta \in f^{-1}(\gamma)} \left[  \zeta_{A_{2n-1}}(\phi(w))_{\delta} + \sum\limits_{r \in V_{2,l}(R)} |\phi(w) \cdot r|_{\delta}  -  \sum\limits_{r \in V_{0,l}(R)} |\phi(w) \cdot r|_{\delta} \right]
\end{equation}

As before, the weight function $\rho$ on $\Delta_{C_n}$ lifts to a function $\rho_{A_{2n-1}}$ on $\Delta_{A_{2n-1}}$. For $C_n$ we define $H_0, ..., H_m$ to be a partition of $[n]$ determined by $\rho^{-1}(2)$ as for $A$-type and $B$-type (where $m = \#\rho^{-1}(2)$). As in the case of $B$-type, $N_m = \#H_m = 0$ if and only if $\rho(\gamma_n) = 2$. Then the shape of the blocks $\{H^A_i\}$ splits naturally into two cases based on whether $\rho(\gamma_n)$ is $2$ or $0$. In the case where $\rho(\gamma_n) = 2$, we have
\begin{equation*}
    N^A_j = \begin{cases} N_j & j < m \\
    N_{2m-1-j} & m \leq j \leq 2m-1.        
    \end{cases}
\end{equation*}
On the other hand, when $\rho(\gamma_n) = 0$, we have 
\begin{equation*}
    N^A_j = \begin{cases} N_j & j < m \\
    2N_m & j=m \\
    N_{2m-j} & m < j \leq 2m.        
    \end{cases}
\end{equation*}
We study each case separately.

\paragraph{The case of $\rho(\gamma_n)=2$.}

In this case, $V_{0,l}(R)=\varnothing$ and $V_{2, l}(R)$ consists of  ``symmetric'' roots and there are $N^A_m=N^A_{m-1}$ of them. As before, we fix a simple root $\gamma$ and for each $0 \leq j \leq 2m-1$, we let $a_j$ denote the number of elements in $H^A_j$ that are permuted by $\phi(w)$ to the left of all of $f^{-1}(\gamma)$ and let $b_j$ be the elements permuted to the right. For the long roots, the contribution is $a_{m-1}+a_m = b_m+b_{m-1}$. Hence \eqref{eqn: Cn prelim formula} becomes
\begin{equation*}
      \zeta(w)_{\gamma} = \frac{1}{2} \sum\limits_{\delta \in f^{-1}(\gamma)} \left[  \zeta_{A_{2n-1}}(\phi(w))_{\delta} + a_{m-1} + a_m \right].
\end{equation*}
Define $\bm{u}_j = [a_j, N^A_j-a_j]^\top$ as in the $B$-type case. When $\gamma$ is short, we use $b_j=a_{2m-1-j}$ to get
\begin{equation}{\label{eqn: Cn final formula 1}}
    \zeta_{C_n}(w)_{\gamma} = -\frac{1}{2} \sum_{j=0}^{2m} (\bm{u}_j-\bm{u}_{j-1})^\top
        \begin{bmatrix}
            0 & 1 \\ 1 & 0
        \end{bmatrix}
        (\bm{u}_j-\bm{u}_{j-1}) + (a_{m-1} + a_m). 
\end{equation}
When $\gamma$ is long, we thus have
\begin{equation}{\label{eqn: Cn final formula 2}}
    \zeta_{C_n}(w)_{\gamma} = -\frac{1}{4} \sum_{j=0}^{2m} (\bm{u}_j-\bm{u}_{j-1})^\top
        \begin{bmatrix}
            0 & 1 \\ 1 & 0
        \end{bmatrix}
        (\bm{u}_j-\bm{u}_{j-1}) + \frac{a_{m-1}+a_m}{2}.
\end{equation}

\paragraph{The case of $\rho(\gamma_n) = 0$.}

In this case, $V_{2, l}(R) = 0$ and $V_{0, l}(R)$ contains $N^A_m$ many such roots. As before, we fix a simple root $\gamma$ and for each $0 \leq j \leq 2m$, we let $a_j$ denote the number of elements in $H_j$ that are permuted by $\phi(w)$ to the left of all of $f^{-1}(\gamma)$ and let $b_j$ be the elements permuted to the right. For the long roots, the contribution is $-2a_m = -2b_m$ and hence
\begin{equation*}
      \zeta(w)_{\gamma} = \frac{1}{2} \sum\limits_{\delta \in f^{-1}(\gamma)} \left[  \zeta_{A_{2n-1}}(\phi(w))_{\delta} - 2a_m \right].
\end{equation*}
When $\gamma$ is short,  we use $b_j=a_{2m-j}$ to get
\begin{equation}{\label{eqn: Cn final formula 3}}
    \zeta_{C_n}(w)_{\gamma} = -\frac{1}{2} \sum_{j=0}^{2m+1} (\bm{u}_j-\bm{u}_{j-1})^\top
        \begin{bmatrix}
            0 & 1 \\ 1 & 0
        \end{bmatrix}
        (\bm{u}_j-\bm{u}_{j-1}) -2a_m.
\end{equation}
When $\gamma$ is long, we thus have
\begin{equation}{\label{eqn: Cn final formula 4}}
    \zeta_{C_n}(w)_{\gamma} = -\frac{1}{4} \sum_{j=0}^{2m+1} (\bm{u}_j-\bm{u}_{j-1})^\top
        \begin{bmatrix}
            0 & 1 \\ 1 & 0
        \end{bmatrix}
        (\bm{u}_j-\bm{u}_{j-1}) -a_m.
\end{equation}

\subsection{Proof of Theorem \ref{thm: main} in $C$-type}

In this subsection, we complete the proof of Theorem \ref{thm: main} in the $C$-type cases. We utilize Lemma~\ref{lem:matrix-ineq} and the following.

\begin{lemma} \label{lem:C-wt-corresp}
    Fix a weight function $\rho$, and let $m^A = 2m-1$ or $m^A = 2m$ depending on whether $\rho(\gamma_n)$ is $2$ or $0$, respectively. Let $\bm{a} = (a_0,\ldots,a_{m^A})$ be any choice for which $0 \leq a_i \leq N_i^A$ for all $i$. Then $\bm{a}$ comes from a choice of $w \in W_{C_n}$ and $\gamma \in \Delta_{C_n}$ if and only if $a_i > 0$ for some $i$ and $a_i + a_{m^A-i} \leq N_i^A$ for all $i$.
\end{lemma}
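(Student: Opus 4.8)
The plan is to transcribe the proof of Lemma~\ref{lem:B-wt-corresp} to the $C$-type setting. First I would fix the identification of the index set $\{1,\ldots,n,-n,\ldots,-1\}$ on which $W_{C_n}$ acts with the node set $\{1,\ldots,2n\}$ of $A_{2n-1}$, via $\ell\mapsto\ell$ for $\ell\le n$ and $-\ell\mapsto 2n+1-\ell$; under this identification a permutation of $\{1,\ldots,2n\}$ lies in $\phi(W_{C_n})$ if and only if it commutes with the reversal involution $\iota\colon p\mapsto 2n+1-p$, and the two possible shapes of the block partition $\{H^A_j\}$ (the case $\rho(\gamma_n)=2$ with $m^A=2m-1$, and the case $\rho(\gamma_n)=0$ with $m^A=2m$) are both $\iota$-symmetric: $N^A_j=N^A_{m^A-j}$ and $\iota(H^A_j)=H^A_{m^A-j}$ for all $j$, with $\sum_j N^A_j=2n$, and in the second case the central block $H^A_m$ is $\iota$-stable of even size $2N_m$.

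For the forward implication, given $w\in W_{C_n}$ and $\gamma\in\Delta_{C_n}$ I would set $\pi=\phi(w)$ and let $k$ be the node such that the left-most simple root of $A_{2n-1}$ in $f^{-1}(\gamma)$ sits between nodes $k$ and $k+1$; concretely $k=\sum_i a_i$, and $k=n$ exactly when $\gamma=2\bm{e}_n$, whereas $1\le k\le n-1$ when $\gamma$ is a short simple root. Then $a_i$ counts the elements of $H^A_i$ that $\pi$ maps into $\{1,\ldots,k\}$; applying $\iota$ and using $\iota\pi=\pi\iota$ and $\iota(H^A_i)=H^A_{m^A-i}$ shows $a_{m^A-i}$ equals the number of elements of $H^A_i$ that $\pi$ maps into $\{2n+1-k,\ldots,2n\}$. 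Since $k\le n$ these two target sets are disjoint, so $a_i+a_{m^A-i}\le N^A_i$ for all $i$, and $k\ge 1$ forces $a_i>0$ for some $i$.

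For the converse, given $\bm{a}$ with $0\le a_i\le N^A_i$, some $a_i>0$, and $a_i+a_{m^A-i}\le N^A_i$ for all $i$, I would put $k=\sum_i a_i$; summing the inequalities over $i$ gives $2k\le\sum_i N^A_i=2n$, hence $1\le k\le n$, and I set $\gamma=\bm{e}_k-\bm{e}_{k+1}$ when $k<n$ and $\gamma=2\bm{e}_n$ when $k=n$ (in the latter case the hypotheses are automatically equalities, consistent with the construction). I would then build $\pi$ in three passes: iterating over $H^A_0,\ldots,H^A_{m^A}$ from left to right, send the first $a_i$ elements of block $i$ to the left-most nodes not yet used, thereby filling $\{1,\ldots,k\}$; then fill $\{2n+1-k,\ldots,2n\}$ by the $\iota$-image of the assignment just made, which by $a_i+a_{m^A-i}\le N^A_i$ uses precisely the last $a_{m^A-i}$ elements of block $i$; finally list the remaining $2n-2k$ elements, namely the middle segments of the blocks, in increasing order and send them in order onto the remaining nodes $\{k+1,\ldots,2n-k\}$. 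Both the set of remaining elements and the set of remaining nodes are $\iota$-stable, and the increasing-to-increasing bijection between them intertwines the two reversals, so $\pi$ commutes with $\iota$; thus $\pi=\phi(w)$ for a unique $w\in W_{C_n}$, and by construction $\bm{a}$ is exactly the tuple attached to $(w,\gamma)$.

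I do not expect a real obstacle: the argument is essentially a line-by-line translation of the $B$-type proof, with the mild simplification that the reversal $\iota$ is fixed-point-free here. The points to handle with care are the uniform treatment of the two block shapes and the boundary value $k=n$, where $\gamma$ becomes the long simple root and $f^{-1}(\gamma)$ is a single node rather than an $\iota$-symmetric pair; once one checks $\sum_j N^A_j=2n$ and that the central block in the $\rho(\gamma_n)=0$ case is $\iota$-stable of even size, the construction above goes through unchanged.
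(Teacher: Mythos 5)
Your proposal is correct and takes the same approach as the paper, which in fact simply cites Lemma~\ref{lem:B-wt-corresp} and records the same two modifications you identify (the index set $\{1,\ldots,n,-n,\ldots,-1\}$ without a fixed point, so each $\gamma\in\Delta_{C_n}$ except the long root pairs with two simple roots of $A_{2n-1}$, and $\frac{1}{2}\sum_i N_i^A = n$). You have filled in more detail than the paper does, in particular the $\iota$-equivariance of the third pass and the forced-equality behaviour at $k=n$, and these details check out.
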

\begin{proof}
    The proof of this fact is essentially the same as that of Lemma~\ref{lem:B-wt-corresp}. The difference is that here we consider signed permutations of $\{1,\ldots,n,-n,\ldots,-1\}$ (instead of $\{1,\ldots,n,0,-n,\ldots,-1\}$). Thus each $\gamma \in \Delta_{C_n}$ corresponds to two simple roots of $A_{2n-1}$, except for the long simple root in $\Delta_{C_n}$ which corresponds to $\bm{e}_n-\bm{e}_{n+1} \in \Delta_{A_{2n-1}}$. And further, we have $\frac{1}{2} \sum_{i=0}^{m^A} N_i^A = n$. With these changes, the proof of Lemma~\ref{lem:B-wt-corresp} proves the result.
\end{proof}

We now proceed with the proof of Theorem \ref{thm: main}. Recall from \cite{BalaCarter} that $\rho$ is distinguished precisely when $N_0=1$, $N_{i-1} \leq N_i \leq N_{i-1}+1$ for $1 \leq i \leq m-1$, and $\rho(\gamma_n) = 2$. We now deduce that in each distinguished case, $\zeta_{C_n}(w)_{\Delta}$ is strictly positive for all $w \in W_{C_n}$. Indeed, this follows from applying  Lemma~\ref{lem:matrix-ineq} to \eqref{eqn: Cn final formula 1} and \eqref{eqn: Cn final formula 2} since we have $\zeta_{C_n}(w)_\gamma \geq 0$ in such cases, with equality if and only if $a_{m-1} = a_m = 0$ and $\bm{u}_i - \bm{u}_{i-1} \in \{\bm{0},\bm{e}_1,\bm{e}_2\}$ for all $0 \leq i \leq m-1$ and $\bm{u}_m-\bm{u}_{m-1} = \bm{0}$ and $\bm{u}_i - \bm{u}_{i-1} \in \{\bm{0},-\bm{e}_1,-\bm{e}_2\}$ for all $m+1 \leq i \leq 2m$. This implies $a_i = 0$ for all $0 \leq i \leq 2m-1$, which does not correspond to a choice of $w$ and $\gamma$ by Lemma~\ref{lem:C-wt-corresp}.

We now prove the converse. First we assume $\rho(\gamma_n) = 2$. Suppose for some $0 \leq s \leq m-1$ we have $N^A_s \leq N^A_{s-1} - 1$ and let $s$ be minimal with this property and observe $N_{-1} = 0$ implies $s \geq 1$. Hence, $N_i = N^A_i \geq 1$ for all $0 \leq i \leq s-1$. Set $a_i = 1$ for $0 \leq i \leq s-1$, and set $a_i = 0$ for all other values of $i$. Thus $a_i + a_{2m-1-i} \leq 1 \leq N_i^A$ for all $i$, and therefore $\bm{a}$ corresponds to a choice of $w$ and $\gamma$ by Lemma \ref{lem:C-wt-corresp}. Up to a factor of $2$ when $\gamma$ is a long root (compare \eqref{eqn: Cn final formula 3} and \eqref{eqn: Cn final formula 4}), we now compute
\[
\begin{split}
    \zeta_{C_n}(w)_\gamma &= -\frac{1}{2} (\bm{u}_0-\bm{u}_{-1})^\top
        \begin{bmatrix}
            0 & 1 \\ 1 & 0
        \end{bmatrix}
        (\bm{u}_0-\bm{u}_{-1}) \\
        &- \frac{1}{2} (\bm{u}_s-\bm{u}_{s-1})^\top
        \begin{bmatrix}
            0 & 1 \\ 1 & 0
        \end{bmatrix}
        (\bm{u}_s-\bm{u}_{s-1}) + (a_{m-1}+a_m) \\
        &= -(N_0^A-1) + (N_s^A - N_{s-1}^A + 1) + 0 \\
        &\leq 0.
\end{split}
\]

The remaining case to consider when $\rho(\gamma_n) = 2$ is where $N^A_i \geq N^A_{i-1}$ for all $0 \leq i \leq m-1$, and for some $0 \leq s \leq m-1$ we have $N^A_s \geq N^A_{s-1}+2$. Let $s$ be maximal with this property. Thus $N^A_j \geq 2$ for all $s \leq j \leq 2m-1-s$. Now set $a_i = 0$ for $i < s$ and $i > 2m-1-s$, and set $a_i = 1$ for $s \leq i \leq 2m-1-s$. Thus $a_i + a_{2m-1-i} = 2 \leq N_i^A$ for all $s \leq i \leq 2m-1-s$ and $a_i + a_{2m-1-i} = 0$ otherwise, and therefore $\bm{a}$ corresponds to a choice of $w$ and $\gamma$ by Lemma \ref{lem:C-wt-corresp}. Up to a factor of $2$ when $\gamma$ is a long root (compare \eqref{eqn: Cn final formula 3} and \eqref{eqn: Cn final formula 4}), we compute
\[
\begin{split}
    \zeta_{C_n}(w)_\gamma &= -\frac{1}{2} (\bm{u}_s-\bm{u}_{s-1})^\top
        \begin{bmatrix}
            0 & 1 \\ 1 & 0
        \end{bmatrix}
        (\bm{u}_s-\bm{u}_{s-1}) \\
        &- \frac{1}{2} (\bm{u}_{2m-s}-\bm{u}_{2m-1-s})^\top
        \begin{bmatrix}
            0 & 1 \\ 1 & 0
        \end{bmatrix}
        (\bm{u}_{2m-s}-\bm{u}_{2m-1-s}) + (a_{m-1}+a_m) \\
        &= -(N_s^A-N_{s-1}^A-1) + (N_{s-1}^A - N_s^A + 1) + 2 \\
        &\leq -2(2-1) + 2 = 0.
\end{split}
\]

Now we assume $\rho(\gamma_n) = 0$. Let $s \leq m$ be minimal such that $N_j^A \geq 2$ for all $s \leq j \leq m$. (Since $N_m^A \geq 2$ in this case, this is always possible.) Set $a_i = 1$ for all $s \leq i \leq 2m-s$, and set $a_i = 0$ for all other values of $i$. Thus $a_i + a_{2m-i} = 2 \leq N_i^A$ for all $s \leq i \leq 2m-s$ and $a_i + a_{2m-i} = 0$ otherwise, and therefore $\bm{a}$ corresponds to a choice of $w$ and $\gamma$ by Lemma~\ref{lem:C-wt-corresp}. Up to a factor of $2$ when $\gamma$ is a long root (compare \eqref{eqn: Cn final formula 3} and \eqref{eqn: Cn final formula 4}), we compute
\[
\begin{split}
    \zeta_{C_n}(w)_\gamma &= -\frac{1}{2} (\bm{u}_s-\bm{u}_{s-1})^\top
        \begin{bmatrix}
            0 & 1 \\ 1 & 0
        \end{bmatrix}
        (\bm{u}_s-\bm{u}_{s-1}) \\
        &- \frac{1}{2} (\bm{u}_{2m-s+1}-\bm{u}_{2m-s})^\top
        \begin{bmatrix}
            0 & 1 \\ 1 & 0
        \end{bmatrix}
        (\bm{u}_{2m-s+1}-\bm{u}_{2m-s}) - 2a_m \\
        &= -(N_s^A-N_{s-1}^A-1) + (N_{s-1}^A - N_s^A + 1) - 2 \\
        &\leq -2(2 - 2) -2 = -2. 
\end{split}
\]
\section{\texorpdfstring{$D_n$}{Dn} case}

We now repeat the above analysis in the $D_n$ case. We work with the presentation of the $D_n$ root system with roots $\pm \bm{e}_i \pm \bm{e}_j$ for $i \neq j$, for $1 \leq i, j \leq n$ and define the ordered set $\Delta_{D_n} := \{ \bm{e}_1 - \bm{e}_2, ... , \bm{e}_{n-1} - \bm{e}_n, \bm{e}_{n-1} + \bm{e}_n\}$. We also denote the elements of $\Delta_{D_n}$ as $\gamma_1,\ldots,\gamma_n$ in the order given above.

Recall that the Weyl group of $D_n$ is the set of signed permutations of $[n]$ with evenly many sign flips. Let $W'_{D_n}$ be the group of signed permutations of $[n]$ so that naturally $W_{D_n} \subset W'_{D_n}$. We embed $W'_{D_n}$ into $W_{A_{2n-1}} = S_{2n}$ by considering $W'_{D_n}$ to be the set of permutations $\pi$ of
\[
    \{1,2,\ldots,n-1,n,-n,-(n-1),\ldots,-2,-1\}
\]
for which $\pi(i) = j$ implies $\pi(-i) = -j$. Call this embedding $\phi: W'_{D_n} \to W_{A_{2n-1}}$. We further have a linear map $f$, mapping the root lattice of $A_{2n-1}$ onto the root lattice of $D_n$, given via
\[
    f(\bm{e}_i) := \begin{cases}
        \bm{e}_i, & i \leq n \\
        -\bm{e}_{2n+1-i}, & i \geq n+1
    \end{cases}.
\]
Further we have, when restricted to roots,
\begin{align*}
    f^{-1}(\bm{e}_i - \bm{e}_j) &= \{\bm{e}_i - \bm{e}_j, \bm{e}_{2n+1-j} - \bm{e}_{2n+1-i}\}, &\text{ for } 1 \leq i < j \leq n\\
    f^{-1}(\bm{e}_i + \bm{e}_j) &= \{\bm{e}_i - \bm{e}_{2n+1-j}, \bm{e}_j - \bm{e}_{2n+1-i}\}, &\text{ for } 1 \leq i < j \leq n.
\end{align*}
Thus $f$ is positive and root surjective, and the embedding $\phi$ is compatible with $f$ as defined above. Note further that $m_s = 2$ for all positive roots $s \in D_n$.

To simplify the computations, note that $\zeta_{D_n}(w)$ still makes sense for all $w \in W'_{D_n}$ since $W'_{D_n}$ preserves the roots of $D_n$. Given $w \in W_{D_n}$, we will denote by $\tilde{w}$ the signed permutation given by applying $w$ followed by flipping the sign of $n$ (the outer automorphism of the Dynkin diagram of $D_n$). With this, we have the following, which implies we can reduce to computing the coefficients of the simple roots $\bm{e}_1-\bm{e}_2, \ldots, \bm{e}_{n-2}-\bm{e}_{n-1},\bm{e}_{n-1}+\bm{e}_n$ (i.e. all but one tail node) for all signed permutations.

\begin{lemma}
    Fix $w \in W_{D_n}$ and simple root $\gamma \in D_n$. Then
    \[
        \zeta_{D_n}(w)_\gamma = \begin{cases}
            \zeta_{D_n}(\tilde{w})_\gamma, & \gamma \in \{\bm{e}_1-\bm{e}_2, \ldots, \bm{e}_{n-2} - \bm{e}_{n-1}\} \\
            \zeta_{D_n}(\tilde{w})_{\bm{e}_{n-1}+\bm{e}_n}, & \gamma = \bm{e}_{n-1} - \bm{e}_n \\
            \zeta_{D_n}(\tilde{w})_{\bm{e}_{n-1}-\bm{e}_n}, & \gamma = \bm{e}_{n-1} + \bm{e}_n.
        \end{cases}.
    \]
\end{lemma}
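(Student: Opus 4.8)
The plan is to recognize $\tilde w$ as the composition $\sigma \circ w$, where $\sigma$ is the sign flip $\bm e_n \mapsto -\bm e_n$, and to exploit that $\sigma$ is the order-two diagram automorphism of $D_n$. Explicitly, on $\Delta_{D_n}$ the map $\sigma$ fixes each $\bm e_i - \bm e_{i+1}$ for $1 \le i \le n-2$ and interchanges the two tail roots $\bm e_{n-1} - \bm e_n$ and $\bm e_{n-1} + \bm e_n$; in particular $\sigma$ permutes $\Delta_{D_n}$, hence permutes the positive roots of $D_n$, and therefore $|\sigma(r)| = \sigma(|r|)$ for every root $r$ of $D_n$ (the same elementary observation about positivity-preserving maps used in Section~\ref{sec:A-reduction}).

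First I would compute $\zeta_{D_n}(\tilde w)$ directly. The sets $V_0$ and $V_2$ depend only on $\rho$, not on the Weyl-group element, and $\tilde w$ acts on a root $v$ by $v \mapsto \sigma(w(v))$; hence the relation $|\sigma(r)| = \sigma(|r|)$ together with linearity of $\sigma$ gives
\[
  \zeta_{D_n}(\tilde w) = \sum_{v \in V_2} \bigl|\sigma(w(v))\bigr| - \sum_{v \in V_0} \bigl|\sigma(w(v))\bigr| = \sigma\!\left(\sum_{v \in V_2} |w(v)| - \sum_{v \in V_0} |w(v)|\right) = \sigma\bigl(\zeta_{D_n}(w)\bigr).
\]

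Next I would pass from this identity of root-lattice vectors to the claimed identity of $\Delta_{D_n}$-coordinates. For a general lattice vector $x = \sum_{\delta \in \Delta_{D_n}} x_\delta\,\delta$ one has $\sigma(x) = \sum_{\delta} x_\delta\,\sigma(\delta)$, so the coefficient of a simple root $\eta$ in $\sigma(x)$ equals $x_{\sigma^{-1}(\eta)} = x_{\sigma(\eta)}$, using $\sigma^2 = \mathrm{id}$. Applying this with $x = \zeta_{D_n}(w)$ and $\eta = \sigma(\gamma)$ yields
\[
  \zeta_{D_n}(\tilde w)_{\sigma(\gamma)} = \sigma\bigl(\zeta_{D_n}(w)\bigr)_{\sigma(\gamma)} = \zeta_{D_n}(w)_{\sigma(\sigma(\gamma))} = \zeta_{D_n}(w)_{\gamma},
\]
that is, $\zeta_{D_n}(w)_\gamma = \zeta_{D_n}(\tilde w)_{\sigma(\gamma)}$. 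Reading off $\sigma(\gamma)$ from the description above --- it fixes $\bm e_i - \bm e_{i+1}$ for $i \le n-2$ and swaps the two tail nodes --- gives precisely the three cases in the statement.

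I do not expect a genuine obstacle here. The two points needing care are purely bookkeeping: $\tilde w$ must be read as ``$\sigma$ after $w$'', which is exactly what lets the sums over the \emph{fixed} index sets $V_0,V_2$ transform without reindexing the summation sets; and the index identity $\sigma(x)_\eta = x_{\sigma(\eta)}$, which uses only that $\sigma$ is an involutive permutation of $\Delta_{D_n}$. The one conceptual input --- a diagram automorphism preserves positivity and hence commutes with $|\cdot|$ --- is immediate.
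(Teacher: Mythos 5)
Your argument is correct and rests on the same underlying facts as the paper's proof, namely that $\tilde w = \sigma \circ w$ where $\sigma$ is the sign flip $\bm e_n \mapsto -\bm e_n$, that $\sigma$ preserves positivity so $|\tilde w(r)| = \sigma(|w(r)|)$, and that $\sigma$ permutes $\Delta_{D_n}$ by fixing the chain roots and swapping the two tail roots. The organization differs in a minor but pleasant way: you isolate the lattice-vector identity $\zeta_{D_n}(\tilde w) = \sigma\bigl(\zeta_{D_n}(w)\bigr)$ and then pass to coordinates once via $\sigma(x)_\eta = x_{\sigma(\eta)}$, whereas the paper works at the coordinate level throughout, doing a direct case analysis on $|w(r)| \in \{\bm e_i \pm \bm e_j\}$ to match up $|\tilde w(r)|_\gamma$ with $|w(r)|_{\sigma(\gamma)}$ and then summing over $V_2$ and $V_0$. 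Both are the same computation; your version makes the role of the diagram automorphism explicit and would generalize verbatim to any diagram automorphism permuting $\Delta$.
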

\begin{proof}
    Fix any root $r \in D_n$ and any $w \in W_{D_n}$. Note that $w(r)$ is positive if and only if $\tilde{w}(r)$ is positive. If $|w(r)| = \bm{e}_i \pm \bm{e}_j$ for $i < j < n$ then $|\tilde{w}(r)| = |w(r)|$. And if $|w(r)| = \bm{e}_i \pm \bm{e}_n$ for $i < n$ then $|\tilde{w}(r)| = \bm{e}_i \mp \bm{e}_n$, which implies $|\tilde{w}(r)| = |w(r)| \pm [(\bm{e}_{n-1}-\bm{e}_n)-(\bm{e}_{n-1}+\bm{e}_n)] = |w(r)| \pm (\gamma_{n-1}-\gamma_n)$. Thus for any simple root $\gamma \in \Delta_{D_n}$ we have
    \[
        |\tilde{w}(r)|_\gamma = \begin{cases}
            |w(r)|_\gamma, & \gamma \in \{\bm{e}_1-\bm{e}_2,\ldots,\bm{e}_{n-2}-\bm{e}_{n-1}\} \\
            |w(r)|_{\bm{e}_{n-1}+\bm{e}_n}, & \gamma = \bm{e}_{n-1}-\bm{e}_n \\
            |w(r)|_{\bm{e}_{n-1}-\bm{e}_n}, & \gamma = \bm{e}_{n-1}+\bm{e}_n.
        \end{cases}
    \]
    The result follows. One way to see the last two rows is to notice that the difference between $|w(r)|_{\gamma_n}$ and $|w(r)|_{\gamma_{n-1}}$ is given by the $\bm{e}_n$-coefficient of $|w(r)|$ in the $\{\bm{e}_i\}$-basis. Hence, for instance we have for $|w(r)| = \bm{e}_i \pm \bm{e}_n$ that 
    \begin{align*}
        |\tilde{w}(r)|_{\gamma_n} - |w(r)|_{\gamma_{n-1}} &= (|\tilde{w}(r)|_{\gamma_n} - |w(r)|_{\gamma_n}) + (|w(r)|_{\gamma_n} - |w(r)|_{\gamma_{n-1}})\\
        &= (\mp 1) + (\pm 1)=0.
    \end{align*}
\end{proof}

\begin{corollary}
    Fix a tail root $\gamma' \in D_n$. We have that $\zeta_{D_n}(w)_\gamma > 0$ for all $w \in W_{D_n}$ and all simple roots $\gamma \in D_n$ if and only if $\zeta_{D_n}(w)_\gamma > 0$ for all $w \in W'_{D_n}$ and all simple roots $\gamma \in D_n$ such that $\gamma \neq \gamma'$.
\end{corollary}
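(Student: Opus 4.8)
The plan is to deduce this corollary directly from the preceding lemma, which shows that the outer automorphism $w \mapsto \tilde{w}$ of $W_{D_n}$ permutes the $\zeta$-coordinates: on the ``spine'' nodes $\bm{e}_1-\bm{e}_2,\ldots,\bm{e}_{n-2}-\bm{e}_{n-1}$ it fixes the coordinate, and on the two ``tail'' nodes $\gamma_{n-1} = \bm{e}_{n-1}-\bm{e}_n$ and $\gamma_n = \bm{e}_{n-1}+\bm{e}_n$ it swaps them. The first observation is that $W'_{D_n}$ is generated by $W_{D_n}$ together with the single element realizing the diagram flip, and concretely every element of $W'_{D_n} \setminus W_{D_n}$ is of the form $\tilde{w}$ for some $w \in W_{D_n}$. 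So the collection of all pairs $(\zeta_{D_n}(w)_\gamma)$ ranging over $w \in W'_{D_n}$ and all simple $\gamma$ is, by the lemma, exactly the same \emph{set of numbers} as the collection ranging over $w \in W_{D_n}$ and all simple $\gamma$; passing from $W_{D_n}$ to $W'_{D_n}$ adds no new values. In particular, ``$\zeta_{D_n}(w)_\gamma > 0$ for all $w \in W_{D_n}$ and all $\gamma$'' is equivalent to ``$\zeta_{D_n}(w)_\gamma > 0$ for all $w \in W'_{D_n}$ and all $\gamma$.''

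Next I would remove the redundancy among the two tail nodes. Fix the chosen tail root $\gamma'$; call the other tail root $\gamma''$. I claim that for every $w \in W'_{D_n}$ there is some $w^\dagger \in W'_{D_n}$ with $\zeta_{D_n}(w)_{\gamma'} = \zeta_{D_n}(w^\dagger)_{\gamma''}$: indeed, if $w \in W_{D_n}$ take $w^\dagger = \tilde{w}$ and apply the swapping rows of the lemma, while if $w = \tilde v$ with $v \in W_{D_n}$ take $w^\dagger = v$. Hence positivity of the $\gamma''$-coordinate over all of $W'_{D_n}$ already forces positivity of the $\gamma'$-coordinate over all of $W'_{D_n}$, so the constraint ``$\gamma \neq \gamma'$'' loses nothing. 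For the spine nodes the lemma gives $\zeta_{D_n}(w)_\gamma = \zeta_{D_n}(\tilde w)_\gamma$, but those coordinates are already being checked, so nothing further is needed there.

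Putting the two reductions together: ``$\zeta_{D_n}(w)_\gamma > 0$ for all $w \in W_{D_n}$ and all simple $\gamma$'' $\iff$ ``$\zeta_{D_n}(w)_\gamma > 0$ for all $w \in W'_{D_n}$ and all simple $\gamma$'' $\iff$ ``$\zeta_{D_n}(w)_\gamma > 0$ for all $w \in W'_{D_n}$ and all simple $\gamma$ with $\gamma \neq \gamma'$,'' which is the statement. Both implications are immediate from the lemma, so this should be a short proof.

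The only genuine point requiring care — and the step I expect to be the main (minor) obstacle — is the structural fact that $W'_{D_n} \setminus W_{D_n} = \{\tilde{w} : w \in W_{D_n}\}$, i.e.\ that every odd signed permutation arises by post-composing an element of $W_{D_n}$ with the fixed sign-flip of the last coordinate. This is true because that sign-flip $\sigma$ lies in $W'_{D_n}\setminus W_{D_n}$ and $W_{D_n}$ has index $2$ in $W'_{D_n}$, so the nontrivial coset is $\sigma W_{D_n} = W_{D_n}\sigma$; but one should double-check that the map $\tilde{(\cdot)}$ as \emph{defined in the excerpt} (apply $w$, then flip the sign of $n$) really is $w \mapsto \sigma w$ — i.e.\ that the flip is applied on the correct side — and that this is compatible with the normalization under which the lemma was proved. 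Once that bookkeeping is confirmed, everything else is a formal unwinding of the lemma's case distinction. I would also note explicitly that the statement is about the bi-implication for \emph{all} simple $\gamma$ on the left versus all $\gamma \neq \gamma'$ on the right, so the ``only if'' direction (restricting the set of $\gamma$'s) is trivial and only the ``if'' direction uses the lemma.
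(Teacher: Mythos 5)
Your proposal is correct and is the argument the paper intends; the corollary is stated without a written proof, being an immediate unwinding of the preceding lemma, and your two-step reduction (first enlarging $W_{D_n}$ to $W'_{D_n}$ using the coset decomposition, then dropping one tail coordinate using the swap) is exactly that unwinding. The one point you flag as needing care — that $W'_{D_n}\setminus W_{D_n} = \{\tilde{w}: w\in W_{D_n}\}$ — is indeed fine and unambiguous regardless of the left/right bookkeeping, since $W_{D_n}$ has index $2$ in $W'_{D_n}$ (hence is normal, so the nontrivial left and right cosets of $\sigma$ coincide), and the sign flip of $n$ is an odd signed permutation, so $\tilde{w}$ always lands in that nontrivial coset.
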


Now let $\rho$ be a weighting of the simple roots of $D_n$. To ensure that $f^{-1}(\rho)[\delta] \in \{0,2\}$ for all simple roots $\delta \in A_{2n-1}$, we assume that $\rho(\gamma_n) \geq \rho(\gamma_{n-1})$ by applying the non-trivial outer automorphism of the Dynkin diagram $\theta$ to $\rho$ if necessary. This is without loss of generality because applying $\theta \in W_{D_n}'$ to the roots of $D_n$ and the weight function $\rho$ shows that $\rho$ is distinguished if and only if $\theta(\rho)$ is distinguished, and that the desired positivity condition holds for $\rho$ if and only if it holds for $\theta(\rho)$. As above, define $U_k := V_k(A_{2n-1}) \setminus f^{-1}(V_k(D_n))$ for all $k$. Thus for any $w \in W'_{D_n}$ and any simple root $\gamma \in D_n$, by Corollary~\ref{cor:score-coeffs} we have
\begin{equation} \label{eqn: Dn prelim formula}
    \zeta_{D_n}(w)_\gamma = \sum_\delta \frac{f(\delta)_\gamma}{2} \left[\zeta_{A_{2n-1}}(\phi(w))_\delta - \left(\sum\limits_{r \in U_2} |\phi(w) \cdot r|_\delta - \sum\limits_{r \in U_0} |\phi(w) \cdot r|_\delta\right)\right].
\end{equation}
Note that $U_2$ and $U_0$ are precisely the roots in $V_2(A_{2n-1})$ and $V_0(A_{2n-1})$ which are ``symmetric''; that is, those that have the form $\bm{e}_i - \bm{e}_{2n+1-i}$ for some $i$. Note also that for $\gamma \not\in \{\bm{e}_{n-1}-\bm{e}_n, \bm{e}_{n-1}+\bm{e}_n\}$, we have that $f(\delta)_\gamma = 1$ for two simple roots $\delta_1$ and $\delta_2$ and $f(\delta)_\gamma = 0$ otherwise. For $\gamma = \bm{e}_{n-1}+\bm{e}_n$, we have that $f(\delta)_\gamma = 1$ for one simple root $\delta_0 = \bm{e}_n-\bm{e}_{n+1}$ and $f(\delta)_\gamma = 0$ otherwise. Finally for $\gamma = \bm{e}_{n-1}-\bm{e}_n$, we have that $f(\delta)_\gamma = 1$ for two simple roots $\delta_1 = \bm{e}_{n-1}-\bm{e}_n$ and $\delta_2 = \bm{e}_{n+1} - \bm{e}_{n+2}$, $f(\delta)_\gamma = -1$ for one simple root $\delta_0 = \bm{e}_n-\bm{e}_{n+1}$, and $f(\delta)_\gamma = 0$ otherwise.

As before, the weight function $\rho$ on $\Delta_{D_n}$ lifts to a function $\rho_{A_{2n-1}}$ on $\Delta_{A_{2n-1}}$. For $D_n$ we define $H_0, ..., H_m$ to be a partition of $[n]$ determined by $\rho^{-1}(2)$ as for the other types (where $m = \#\rho^{-1}(2)$). As in the case of $B$-type and $C$-type, $N_m = \#H_m = 0$ if and only if $\rho(\gamma_n) = 2$. Then the shape of the blocks $\{H^A_i\}$ splits naturally into three cases based on whether $(\rho(\gamma_{n-1}),\rho(\gamma_n))$ is $(2,2)$, $(0,0)$, or $(0,2)$ (recall we assume $\rho(\gamma_n) \geq \rho(\gamma_{n-1})$). In the case where $\rho(\gamma_{n-1}) = \rho(\gamma_n) = 2$, we have
\begin{equation*}
    N^A_j = \begin{cases} N_j & j < m-1 \\
    2N_{m-1} = 2 & j=m-1 \\
    N_{2m-2-j} & m \leq j \leq 2m-2.        
    \end{cases}
\end{equation*}
In the case where $\rho(\gamma_{n-1}) = \rho(\gamma_n) = 0$, we have
\begin{equation*}
    N^A_j = \begin{cases} N_j & j < m \\
    2N_m & j=m \\
    N_{2m-j} & m < j \leq 2m.        
    \end{cases}
\end{equation*}
In the case where $\rho(\gamma_{n-1}) = 0$ and $\rho(\gamma_n) = 2$, we have
\begin{equation*}
    N^A_j = \begin{cases} N_j & j < m \\
    N_{2m-1-j} & m \leq j \leq 2m-1.        
    \end{cases}
\end{equation*}
We study each case separately.
\paragraph{The case $\rho(\gamma_{n-1}) = \rho(\gamma_n) = 2$.}

In this case, $U_2$ is empty and $U_0 = \{\pm(\bm{e}_n - \bm{e}_{n+1})\}$. Fix a simple root $\gamma \in \Delta_{D_n}$ such that $\gamma$ is not a tail simple root, and fix $w \in W_{D_n}'$. For each $0 \leq j \leq 2m-2$, we let $a_j$ denote the number of elements in $H_j^A$ which are permuted by $\phi(w)$ to the left of $\delta_1$ and let $b_j$ denote the number of elements in $H_j^A$ which are permuted by $\phi(w)$ to the right of $\delta_2$. For the two roots in $U_0$, the contribution is $2a_{m-1} = 2b_{m-1}$ Hence \eqref{eqn: Dn prelim formula} becomes
\[
    \zeta_{D_n}(w)_\gamma = \frac{1}{2} \sum_{\delta \in \{\delta_1,\delta_2\}} [\zeta_{A_{2n-1}}(\phi(w))_\delta + 2a_{m-1}]
\]
Defining $\bm{u}_j = [a_j, N_j^A-a_j]^\top$ as above, $b_j = a_{2m-2-j}$ implies
\begin{equation} \label{eqn: Dn final formula 22 1}
    \zeta_{D_n}(w)_\gamma = -\frac{1}{2} \sum_{j=0}^{2m-1} (\bm{u}_j-\bm{u}_{j-1})^\top
        \begin{bmatrix}
            0 & 1 \\
            1 & 0 
        \end{bmatrix}
        (\bm{u}_j-\bm{u}_{j-1})
        + 2a_{m-1}.
\end{equation}
For $\gamma = \gamma_n$ (recall we do not explicitly compute the $\gamma_{n-1}$ coefficient), we let $a_j$ denote the number of elements in $H_j^A$ which are permuted by $\phi(w)$ to the left of $\delta_0$. Thus \eqref{eqn: Dn prelim formula} becomes
\[
    \zeta_{D_n}(w)_\gamma = \frac{1}{2} \sum_{\delta \in \{\delta_0\}} [\zeta_{A_{2n-1}}(\phi(w))_\delta + 2a_{m-1}],
\]
and thus we have
\begin{equation} \label{eqn: Dn final formula 22 2}
    \zeta_{D_n}(w)_\gamma = -\frac{1}{4} \sum_{j=0}^{2m-1} (\bm{u}_j-\bm{u}_{j-1})^\top
        \begin{bmatrix}
            0 & 1 \\
            1 & 0 
        \end{bmatrix}
        (\bm{u}_j-\bm{u}_{j-1})
        + a_{m-1}.
\end{equation}
\paragraph{The case $\rho(\gamma_{n-1}) = \rho(\gamma_n) = 0$.}

In this case, $U_2$ is empty and $U_0$ contains $N_m^A$ roots. Fix a simple root $\gamma \in \Delta_{D_n}$ such that $\gamma$ is not a tail simple root, and fix $w \in W_{D_n}'$. For each $0 \leq j \leq 2m-1$, we let $a_j$ denote the number of elements in $H_j^A$ which are permuted by $\phi(w)$ to the left of $\delta_1$ and let $b_j$ denote the number of elements in $H_j^A$ which are permuted by $\phi(w)$ to the right of $\delta_2$. For the roots in $U_0$, the contribution is $2a_m = 2b_m$ Hence \eqref{eqn: Dn prelim formula} becomes
\[
    \zeta_{D_n}(w)_\gamma = \frac{1}{2} \sum_{\delta \in \{\delta_1,\delta_2\}} [\zeta_{A_{2n-1}}(\phi(w))_\delta + 2a_m]
\]
Defining $\bm{u}_j = [a_j, N_j^A-a_j]^\top$ as above, $b_j = a_{2m-j}$ implies
\begin{equation} \label{eqn: Dn final formula 00 1}
    \zeta_{D_n}(w)_\gamma = -\frac{1}{2} \sum_{j=0}^{2m+1} (\bm{u}_j-\bm{u}_{j-1})^\top
        \begin{bmatrix}
            0 & 1 \\
            1 & 0 
        \end{bmatrix}
        (\bm{u}_j-\bm{u}_{j-1})
        + 2a_m.
\end{equation}
For $\gamma = \gamma_n$ (recall we do not explicitly compute the $\gamma_{n-1}$ coefficient), we let $a_j$ denote the number of elements in $H_j^A$ which are permuted by $\phi(w)$ to the left of $\delta_0$. Thus \eqref{eqn: Dn prelim formula} becomes
\[
    \zeta_{D_n}(w)_\gamma = \frac{1}{2} \sum_{\delta \in \{\delta_0\}} [\zeta_{A_{2n-1}}(\phi(w))_\delta + 2a_m],
\]
and thus we have
\begin{equation} \label{eqn: Dn final formula 00 2}
    \zeta_{D_n}(w)_\gamma = -\frac{1}{4} \sum_{j=0}^{2m+1} (\bm{u}_j-\bm{u}_{j-1})^\top
        \begin{bmatrix}
            0 & 1 \\
            1 & 0 
        \end{bmatrix}
        (\bm{u}_j-\bm{u}_{j-1})
        + a_m.
\end{equation}
Note that this case is essentially the same as the $\rho_{n-1} = \rho_n = 2$ case, up to re-indexing $N_j^A$.
\paragraph{The case $\rho(\gamma_{n-1}) = 0$ and $\rho(\gamma_n) = 2$.}

In this case, $U_0$ is empty and $U_2$ contains $N_{m-1}^A = N_m^A$ roots. Fix a simple root $\gamma \in \Delta_{D_n}$ such that $\gamma$ is not a tail simple root, and fix $w \in W_{D_n}'$. For each $0 \leq j \leq 2m$, we let $a_j$ denote the number of elements in $H_j^A$ which are permuted by $\phi(w)$ to the left of $\delta_1$ and let $b_j$ denote the number of elements in $H_j^A$ which are permuted by $\phi(w)$ to the right of $\delta_2$. For the roots in $U_2$, the contribution is $-(a_{m-1}+a_m) = -(b_m+b_{m-1})$. Hence \eqref{eqn: Dn prelim formula} becomes
\[
    \zeta_{D_n}(w)_\gamma = \frac{1}{2} \sum_{\delta \in \{\delta_1,\delta_2\}} [\zeta_{A_{2n-1}}(\phi(w))_\delta - (a_{m-1}+a_m)]
\]
Defining $\bm{u}_j = [a_j, N_j^A-a_j]^\top$ as above, $b_j = a_{2m-1-j}$ implies
\begin{equation} \label{eqn: Dn final formula 02 1}
    \zeta_{D_n}(w)_\gamma = -\frac{1}{2} \sum_{j=0}^{2m} (\bm{u}_j-\bm{u}_{j-1})^\top
        \begin{bmatrix}
            0 & 1 \\
            1 & 0 
        \end{bmatrix}
        (\bm{u}_j-\bm{u}_{j-1})
        - (a_{m-1}+a_m).
\end{equation}
For $\gamma = \gamma_n$ (recall we do not explicitly compute the $\gamma_{n-1}$ coefficient), we let $a_j$ denote the number of elements in $H_j^A$ which are permuted by $\phi(w)$ to the left of $\delta_0$. Thus \eqref{eqn: Dn prelim formula} becomes
\[
    \zeta_{D_n}(w)_\gamma = \frac{1}{2} \sum_{\delta \in \{\delta_0\}} [\zeta_{A_{2n-1}}(\phi(w))_\delta - (a_{m-1}+a_m)],
\]
and thus we have
\begin{equation} \label{eqn: Dn final formula 02 2}
    \zeta_{D_n}(w)_\gamma = -\frac{1}{4} \sum_{j=0}^{2m+1} (\bm{u}_j-\bm{u}_{j-1})^\top
        \begin{bmatrix}
            0 & 1 \\
            1 & 0 
        \end{bmatrix}
        (\bm{u}_j-\bm{u}_{j-1})
        - \frac{a_{m-1}+a_m}{2}.
\end{equation}
\subsection{Proof of Theorem \ref{thm: main} in $D$-type}

In this subsection, we complete the proof of Theorem \ref{thm: main} in the $D$-type cases. We utilize Lemma~\ref{lem:matrix-ineq} and the following.

\begin{lemma} \label{lem:D-wt-corresp}
    Fix a weight function $\rho$, and let $m^A = 2m-2$ or $m^A = 2m$ or $m^A = 2m-1$ depending on whether $(\rho(\gamma_{n-1}), \rho(\gamma_n))$ is $(2,2)$ or $(0,0)$ or $(0,2)$, respectively. Let $\bm{a} = (a_0,\ldots,a_{m^A})$ be any choice for which $0 \leq a_i \leq N_i^A$ for all $i$. Then $\bm{a}$ comes from a choice of $w \in W_{D_n}'$ and $\gamma \in \Delta_{D_n}$ such that $\gamma \neq \gamma_{n-1}$ if and only if $a_i > 0$ for some $i$, $a_i + a_{m^A-i} \leq N_i^A$ for all $i$, and $\sum_{i=0}^{m^A} a_i \neq n-1$.
\end{lemma}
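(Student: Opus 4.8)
The plan is to mimic the proofs of Lemmas~\ref{lem:B-wt-corresp} and~\ref{lem:C-wt-corresp}, working inside $W_{A_{2n-1}} = S_{2n}$ via the embedding $\phi$. The facts I would invoke at the outset are: the image $\phi(W'_{D_n})$ consists exactly of the permutations of $\{1,\dots,n,-n,\dots,-1\}$ commuting with the index-symmetry $i \leftrightarrow 2n+1-i$; the blocks $\{H_j^A\}$ partition $\{1,\dots,2n\}$ and are interchanged in pairs by this symmetry, so $N_j^A = N_{m^A-j}^A$; and $\sum_j N_j^A = 2n$ in each of the three shapes for $\rho$ (a one-line check per case). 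The genuinely new ingredient, compared with the $B$- and $C$-type lemmas, is identifying which $\gamma \neq \gamma_{n-1}$ occur. For such a $\gamma$ I would let $\kappa \in \{1,\dots,2n\}$ be the position of the left-most simple root of $A_{2n-1}$ lying over $\gamma$ under $f$; from the fibre computations this is $\kappa = k$ when $\gamma = \gamma_k = \bm e_k - \bm e_{k+1}$ with $k \le n-2$, and $\kappa = n$ when $\gamma = \gamma_n = \bm e_{n-1}+\bm e_n$ (the position of $\bm e_n - \bm e_{n+1}$). Hence $\kappa$ ranges over $\{1,\dots,n-2\}\cup\{n\} = \{1,\dots,n\}\setminus\{n-1\}$, the set $\{1,\dots,\kappa\}$ sits among the ``positive'' positions $\{1,\dots,n\}$, and $\sum_j a_j = \kappa$ by the definition of the $a_j$.

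For the forward direction, given $w$ and such a $\gamma$: from $\sum_j a_j = \kappa \ge 1$ some $a_i$ is positive, and $\sum_j a_j = \kappa \neq n-1$. For $a_j + a_{m^A-j} \le N_j^A$ I would use that $\phi(w)$ commutes with the symmetry, so $H_j^A$ contains $a_j$ elements mapped into $\{1,\dots,\kappa\}$ and — applying the symmetry to the $a_{m^A-j}$ elements of $H_{m^A-j}^A$ mapped into $\{1,\dots,\kappa\}$ — also $a_{m^A-j}$ elements mapped into $\{2n+1-\kappa,\dots,2n\}$; since $\kappa \le n$ these two target intervals are disjoint, which gives the bound.

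For the converse, given $\bm a$ satisfying the three conditions, put $\kappa := \sum_j a_j$; the first condition gives $\kappa \ge 1$, the second gives $2\kappa = \sum_j(a_j+a_{m^A-j}) \le \sum_j N_j^A = 2n$ hence $\kappa \le n$, and the third gives $\kappa \neq n-1$, so $\kappa \in \{1,\dots,n-2\}\cup\{n\}$. I would take $\gamma := \gamma_\kappa$ if $\kappa \le n-2$ and $\gamma := \gamma_n$ if $\kappa = n$, and then construct a signed permutation realizing $\bm a$ exactly as in the proof of Lemma~\ref{lem:B-wt-corresp}: since $\kappa \le n$, iterating over $H_0^A,\dots,H_{m^A}^A$ and sending the first $a_j$ elements of $H_j^A$ to the left-most not-yet-used positions lands them in $\{1,\dots,\kappa\}$; filling in the forced symmetric images and then making a second pass over the blocks to place the remaining elements symmetrically produces a valid element of $\phi(W'_{D_n})$, which together with $\gamma$ realizes $\bm a$.

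The step I expect to need the most care is the second pass of this construction — arranging the leftover elements so the symmetry constraint is respected — but this is verbatim the argument already carried out for Lemma~\ref{lem:B-wt-corresp}, and the structural inputs it relies on (symmetric blocks, $\sum_j N_j^A = 2n$, $\kappa \le n$) hold identically in all three $\rho$-shapes; the case distinction for $\{N_j^A\}$ has already been absorbed into the definition of $m^A$, so I do not anticipate any new difficulty there.
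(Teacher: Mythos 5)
Your proof is correct and follows essentially the same route as the paper's: reduce to the construction of Lemma~\ref{lem:B-wt-corresp}, keeping track of the two changes (no central fixed point $0$, hence $\tfrac12\sum N_i^A = n$ rather than $n+\tfrac12$; and the exclusion of $\gamma_{n-1}$, which translates to $\sum a_i \neq n-1$). The one place you spell out more than the paper is the identification of $\sum a_j$ with the index $\kappa$ of the left-most $A_{2n-1}$-simple root in the fibre over $\gamma$, and the observation that $\kappa$ ranges exactly over $\{1,\dots,n\}\setminus\{n-1\}$ — the paper phrases the same condition as the fibre not meeting $\{\bm e_{n-1}-\bm e_n,\ \bm e_{n+1}-\bm e_{n+2}\}$, but this is the same fact.
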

\begin{proof}
    The proof of this fact is essentially the same as that of Lemma~\ref{lem:B-wt-corresp}. The first difference is that here we consider signed permutations of $\{1,\ldots,n,-n,\ldots,-1\}$ (instead of $\{1,\ldots,n,0,-n,\ldots,-1\}$). Thus each $\gamma \in \Delta_{C_n}$ such that $\gamma \neq \gamma_{n-1}$ corresponds to two simple roots of $A_{2n-1}$, except for $\gamma_n \in \Delta_{C_n}$ which corresponds to $\bm{e}_n-\bm{e}_{n+1} \in \Delta_{A_{2n-1}}$. And further, we have $\frac{1}{2} \sum_{i=0}^{m^A} N_i^A = n$.

    The second difference comes from the third bullet point above. Given $\bm{a}$ which comes from a choice of $w \in W'_{D_n}$ and $\gamma \in \Delta_{D_n}$ such that $\gamma \neq \gamma_{n-1}$, the fact that $\gamma \neq \gamma_{n-1}$ is equivalent to the simple roots of $A_{2n-1}$ corresponding to $\gamma$ not intersecting $\{\bm{e}_{n-1}-\bm{e}_n, \bm{e}_{n+1}-\bm{e}_{n+2}\}$. This is equivalent to $\sum_{i=0}^{m^A} a_i \neq n-1$. With these changes, the proof of Lemma~\ref{lem:B-wt-corresp} proves the result.
\end{proof}
We now proceed with the proof of Theorem \ref{thm: main}. Recall from \cite{BalaCarter} that $\rho$ is distinguished precisely when one of the following holds:
\begin{enumerate}
    \item $\rho(\gamma_{n-1}) = \rho(\gamma_n) = 2$ and $N_0=1$, $N_{i-1} \leq N_i \leq N_{i-1}+1$ for $1 \leq i \leq m-2$ and $N_{m-2} \leq 2$,
    \item $\rho(\gamma_{n-1}) = \rho(\gamma_n) = 0$ and $N_0=1$, $N_{i-1} \leq N_i \leq N_{i-1}+1$ for $1 \leq i \leq m-1$ and
    \[
        N_m = \begin{cases}
            \frac{N_{m-1}+1}{2}, & N_{m-1} \text{ odd} \\
            \frac{N_{m-1}}{2}, & N_{m-1} \text{ even}
        \end{cases},
    \]
    which is equivalent to $N_{m-1}^A \leq N_m^A \leq N_{m-1}^A+1$.
\end{enumerate}
We now deduce that in each distinguished case, $\zeta_{D_n}(w)_{\Delta}$ is strictly positive for all $w \in W'_{D_n}$. For the $\rho(\gamma_{n-1}) = \rho(\gamma_n) = 0$ case, this follows from applying Lemma~\ref{lem:matrix-ineq} to \eqref{eqn: Dn final formula 00 1} and \eqref{eqn: Dn final formula 00 2} since we have $\zeta_{D_n}(w)_\gamma \geq 0$ in such cases, with equality if and only if $a_m = 0$ and $\bm{u}_i - \bm{u}_{i-1} \in \{\bm{0},\bm{e}_1,\bm{e}_2\}$ for all $0 \leq i \leq m$ and $\bm{u}_i - \bm{u}_{i-1} \in \{\bm{0},-\bm{e}_1,-\bm{e}_2\}$ for all $m+1 \leq i \leq 2m+1$. This implies $a_i = 0$ for all $0 \leq i \leq 2m$, which does not correspond to a choice of $w$ and $\gamma$ by Lemma~\ref{lem:D-wt-corresp}. For the $\rho(\gamma_{n-1}) = \rho(\gamma_n) = 2$ case, essentially the same argument works after adjusting the indexing (note that $N_{m-1}^A = 2$ in this case) by applying Lemma~\ref{lem:matrix-ineq} to \eqref{eqn: Dn final formula 22 1} and \eqref{eqn: Dn final formula 22 2}, and then using Lemma~\ref{lem:D-wt-corresp}.

We now prove the converse. First we assume $\rho(\gamma_{n-1}) = \rho(\gamma_n) = 0$, and note that this implies $N_m \geq 2$ (and thus $N_m^A \geq 4$). Suppose for some $0 \leq s \leq m$ we have $N^A_s \leq N^A_{s-1} - 1$ and let $s$ be minimal with this property and observe $N_{-1} = 0$ implies $s \geq 1$. Set $a_i = 1$ for $0 \leq i \leq s-1$, and set $a_i = 0$ for all other values of $i$. Thus $a_i + a_{2m-i} \leq 1 \leq N_i^A$ for all $i$, and $\sum_{i=0}^{2m} a_i \leq n-2$. Therefore $\bm{a}$ corresponds to a choice of $w$ and $\gamma$ by Lemma~\ref{lem:D-wt-corresp}. Up to a factor of $2$ when $\gamma = \gamma_n$ (compare \eqref{eqn: Dn final formula 00 1} and \eqref{eqn: Dn final formula 00 2}), we now compute
\[
\begin{split}
    \zeta_{D_n}(w)_\gamma &= -\frac{1}{2} (\bm{u}_0-\bm{u}_{-1})^\top
        \begin{bmatrix}
            0 & 1 \\ 1 & 0
        \end{bmatrix}
        (\bm{u}_0-\bm{u}_{-1}) \\
        &- \frac{1}{2} (\bm{u}_s-\bm{u}_{s-1})^\top
        \begin{bmatrix}
            0 & 1 \\ 1 & 0
        \end{bmatrix}
        (\bm{u}_s-\bm{u}_{s-1}) + 2a_m \\
        &= -(N_0^A-1) + (N_s^A - N_{s-1}^A + 1) + 0 \\
        &\leq 0.
\end{split}
\]

The remaining case to consider when $\rho(\gamma_{n-1}) = \rho(\gamma_n) = 0$ is where $N^A_i \geq N^A_{i-1}$ for all $0 \leq i \leq m$, and for some $0 \leq s \leq m$ we have $N^A_s \geq N^A_{s-1}+2$. Let $s$ be maximal with this property. Thus $N^A_j \geq 2$ for all $s \leq j \leq 2m-s$. Now set $a_i = 0$ for $i < s$ and $i > 2m-s$, and set $a_i = 1$ for $s \leq i \leq 2m-s$. Thus $a_i + a_{2m-i} = 2 \leq N_i^A$ for all $s \leq i \leq 2m-s$ and $a_i + a_{2m-i} = 0$ otherwise. Since $N_m \geq 2$, if $\sum_{i=0}^{2m} a_i = n-1$
then $N_m = 2$ and $N_i = a_i+a_{2m-i}$ for all $0 \leq i \leq m-1$, which implies $s = 0$ and $N_i = 2$ for all $0 \leq i \leq m-1$. But then $N_m^A = 4 = N_{m-1}^A + 2$, contradicting maximality of $s$. Therefore $\bm{a}$ corresponds to a choice of $w$ and $\gamma$ by Lemma~\ref{lem:D-wt-corresp}. Up to a factor of $2$ when $\gamma = \gamma_n$ (compare \eqref{eqn: Dn final formula 00 1} and \eqref{eqn: Dn final formula 00 2}), we now compute
\[
\begin{split}
    \zeta_{D_n}(w)_\gamma &= -\frac{1}{2} (\bm{u}_s-\bm{u}_{s-1})^\top
        \begin{bmatrix}
            0 & 1 \\ 1 & 0
        \end{bmatrix}
        (\bm{u}_s-\bm{u}_{s-1}) \\
        &- \frac{1}{2} (\bm{u}_{2m-s}-\bm{u}_{2m-1-s})^\top
        \begin{bmatrix}
            0 & 1 \\ 1 & 0
        \end{bmatrix}
        (\bm{u}_{2m-s}-\bm{u}_{2m-1-s}) + 2a_m \\
        &= -(N_s^A-N_{s-1}^A-1) + (N_{s-1}^A - N_s^A + 1) + 2 \\
        &\leq -2(2-1) + 2 = 0.
\end{split}
\]

For the case of $\rho(\gamma_{n-1}) = \rho(\gamma_n) = 2$, essentially the same argument works as in the $\rho(\gamma_{n-1}) = \rho(\gamma_n) = 0$ case after reindexing ($m \to m-1$) and using Lemma~\ref{lem:D-wt-corresp}. The main difference comes in the fact that $N_{m-1} = 1$ (and thus $N_{m-1}^A = 2$) in the application of Lemma~\ref{lem:D-wt-corresp}. For the first case where $s \leq m-1$ is minimal such that $N_s^A \leq N_{s-1}^A - 1$, we have $N_{s-1}^A \geq 2$ which implies
$\sum_{i=0}^{2m-2} a_i \leq n-(1+1) = n-2$. For the second case where $N_i^A \geq N_{i-1}^A$ for all $0 \leq i \leq m-1$ and $s$ is maximal such that $N_s^A \geq N_{s-1}^A + 2$, we must have $N_i = 2$ for all $0 \leq i \leq m-2$ and thus $s=0$. This implies $\sum_{i=0}^{2m-2} a_i = n$. Therefore in any case $\bm{a}$ corresponds to a choice of $w$ and $\gamma$ by Lemma~\ref{lem:D-wt-corresp}.

Now we assume $\rho(\gamma_{n-1}) = 0$ and $\rho(\gamma_n) = 2$. Let $s \leq m-1$ be minimal such that $N_j^A \geq 2$ for all $s \leq j \leq m-1$. (Since $N_{m-1}^A = N_m^A \geq 2$ in this case, this is always possible.) Set $a_i = 1$ for all $s \leq i \leq m-1$, and set $a_i = 0$ for all other values of $i$. Thus $a_i + a_{2m-1-i} \leq 1 \leq N_i^A$ for all $i$, and $\sum_{i=0}^{2m-1} a_i \leq n-2$.
Therefore $\bm{a}$ corresponds to a choice of $w$ and $\gamma$ by Lemma~\ref{lem:D-wt-corresp}. Up to a factor of $2$ when $\gamma = \gamma_n$ (compare \eqref{eqn: Dn final formula 02 1} and \eqref{eqn: Dn final formula 02 2}), we compute
\[
\begin{split}
    \zeta_{D_n}(w)_\gamma &= -\frac{1}{2} (\bm{u}_s-\bm{u}_{s-1})^\top
        \begin{bmatrix}
            0 & 1 \\ 1 & 0
        \end{bmatrix}
        (\bm{u}_s-\bm{u}_{s-1}) \\
        &- \frac{1}{2} (\bm{u}_m-\bm{u}_{m-1})^\top
        \begin{bmatrix}
            0 & 1 \\ 1 & 0
        \end{bmatrix}
        (\bm{u}_m-\bm{u}_{m-1}) - (a_{m-1}+a_m) \\
        &= -(N_s^A-N_{s-1}^A-1) + 1 - 1 \\
        &\leq -2(2 - 2) = 0.
\end{split}
\]

\section{Exceptional Groups}

For the exceptional cases, one can attempt a reduction to $A$-type using quasi-minuscule representations, but these reductions seem to involve significant casework, especially for larger groups. To give an indication of what is involved, we briefly consider the simplest case of $G_2$.

We have a $7$-dimensional quasi-minuscule representation of $G_2$ and this indicates we can define a map $f: \R^7 \to \R^3$ via
\[
    f(\bm{e}_i) := \begin{cases}
        \bm{e}_1-\bm{e}_3, & i = 1 \\
        \bm{e}_2-\bm{e}_3, & i = 2 \\
        \bm{e}_1-\bm{e}_2, & i = 3 \\
        0, & i = 4 \\
        -(\bm{e}_1-\bm{e}_2), & i = 5 \\
        -(\bm{e}_2-\bm{e}_3), & i = 6 \\
        -(\bm{e}_1-\bm{e}_3), & i = 7.
    \end{cases}
\]
Let $\bm{e}_i-\bm{e}_{i+1}$ be the simple roots of $A_6$, and let $\alpha = \bm{e}_1-\bm{e}_2$ and $\beta = -\bm{e}_1 + 2\bm{e}_2 - \bm{e}_3$ be the simple roots of $G_2$. Then the positive roots of $G_2$ are $\alpha,\beta,\beta+\alpha,\beta+2\alpha,\beta+3\alpha,2\beta+3\alpha$. Note that $f$ maps simple roots to simple roots via
\[
    f(\bm{e}_i-\bm{e}_{i+1}) = \begin{cases}
        \alpha, & i \in \{1,3,4,6\} \\
        \beta, & i \in \{2,5\}.
    \end{cases}
\]
This map is positive and root surjective. Moreover, we can define $\phi: W_{G_2} \rightarrow W_{A_6}$ by $s_{\alpha} \mapsto (12)(35)(67) , s_{\beta} \mapsto (23)(56)$ and $\phi$ is compatible with $f$. With this, we can define $H_i^A$ and $N_i^A$ and $a_i$ as in the classical cases, and attempt to utilize the results of Section~\ref{sec:A-reduction}. However, unlike in the classical group cases, the values of $a_i$ do not determine the value of $\zeta_{G_2}(w)_\gamma$ for a given choice of $w \in W_{G_2}$ and $\gamma \in \Delta_{G_2}$. This means that handling the case of $G_2$ requires a significant amount of casework, which is likely even more true of the other exceptional groups.

In light of these complications, we decided instead to check these cases directly using a computer. We used the root systems library provided by \cite{sagemath}, and our python code can be found in \cite{pos-check-code}. On all exceptional groups except for $E_8$, the code takes a few hours to run on a personal laptop. On the other hand, $E_8$ required a week on a super computer. The most time is spent on the distinguished $\rho$, for which positivity of the coefficient values must be checked for every Weyl group element. For non-distinguished $\rho$ the code runs faster since one only needs to find a single Weyl element for which some coefficient is non-positive. A full listing of such Weyl elements for all non-distinguished $\rho$ can be found in \cite{pos-check-code}.

\printbibliography

\end{document}